\newtheorem{example}{Example}
\newtheorem{definition}{Definition}
\newtheorem{proposition}{Proposition}
\newtheorem{lemma}{Lemma}
\newtheorem{theorem}{Theorem}
\newtheorem{corollary}{Corollary}
\newtheorem{remark}{Remark}
\newcommand{\oomit}[1]{}
\begin{document}

\title{Reach-Avoid Differential Games Based on Invariant Generation}

\author{Bai Xue$^1$ and Qiuye Wang$^1$ and Naijun Zhan$^1$ and Martin Fr\"anzle$^2$  and Shenghua Feng$^1$\thanks{1. State Key Lab. of Computer Science, Institute of Software, CAS, China.(\{xuebai,wangqye,znj,fengsh\}@ios.ac.cn).}
\thanks{2. Carl von Ossietzky Universit\"at Oldenburg, Germany.(martin.fraenzle@uni-oldenburg.de)}
}

\maketitle

\begin{abstract}
Reach-avoid differential games play an important role in collision avoidance, motion planning and control of aircrafts, and related applications. The central problem is the computation of the set of initial states from which the ego player can enforce the satisfiability of  safety specifications over a specified time horizon. Previous methods addressing this problem mostly focus on finite time horizons. We study this problem in the context of the infinite time horizon, where the ego player aims to perpetually force the system to satisfy certain safety specification while the mutual other player attempts to enforce a violation of this safety specification. The problem is studied within the Hamilton-Jacobi reachability framework with unique Lipschitz continuous viscosity solutions. The continuity and uniqueness property of the viscosity solution facilitates the use of contemporary numerical methods to solve this problem with an appropriate number of state variables. An example adopted from a Moore-Greitzer jet-engine model is employed to illustrate our approach.
\end{abstract}
\begin{keywords}
Differential Games; Hamilton-Jacobi Equation; Invariant Sets
\end{keywords}

\section{Introduction}
\label{intro}
Differential games, i.e.\ dynamic games featuring an evolution governed by differential equations, have many important applications in engineering domains, e.g., in the analysis of collision avoidance \cite{mylvaganam2017,xue2017reach}, energy management \cite{dockner2000} and safe reinforcement learning \cite{sharma2010}. They model a form of strategic interactions among rational players, where each player makes decisions in light of its own preference while expecting adversarial actions from the mutual other player. As the resulting winning strategies are robust against any possible action of the adversary, differential games  have in recent years received growing interest as a model facilitating synthesis of reliable control strategies for safety-critical systems.

Differential games were initiated by Rufus Isaacs in the early 1950s when he studied military pursuit-evasion problems while working in the Rand Corporation. The pursuit-evasion game he studied is a two-player zero-sum game, where the players have completely opposite interests \cite{isaacs1999}. A challenging class of differential games is known as reach-avoid games, which are  to determine the set of states from which the ego player is able to drive the system to reach a desired target set of states while staying away from an avoid set, regardless of the opposing actions of the mutual other player - this set goes by many names in the literature, e.g.,  discriminating kernels \cite{aubin1991}, backward reachable sets \cite{mitchell2005} and stable bridges \cite{subbotin2013}. The present work studies this problem within the Hamilton-Jacobi reachability framework. Hamilton-Jacobi reachability analysis addresses reachability problem by exploiting the link to optimal control through viscosity solutions of Hamilton-Jacobi equations \cite{bansal2017}. It extends the use of Hamilton-Jacobi equations, which are widely used in optimal control theory \cite{bardi1999}, to perform reachability analysis over both finite time horizons \cite{lygeros2004,mitchell2005,margellos2011,altarovici2013,fisac2015} and the infinite time horizon \cite{camilli2001,grune2011,grune2015}. While computationally intensive, Hamilton-Jacobi reachability approaches are still appealing nowadays due to the availability of modern numerical tools such as \cite{mitchell2007,bokanowski2013,falcone2016}, which allow solving associated game problems conveniently with appropriate number of state variables. Within the Hamilton-Jacobi framework, continuity of viscosity solutions is a desirable property from a theoretical point of view since discontinuities may invalidate uniqueness of the solution \cite{Bardi1997,fialho1999}. Continuity is also desirable from a numeric computation point of view, since rigorous convergence results for numerical approximations to the derived Hamilton-Jacobi equation usually require continuity of the solution. Unfortunately, reachability analysis under state constraints may induce discontinuities in the viscosity solutions, see for instance \cite{koike1995,Bardi1997,fialho1999,bardi2000,CardaliaguetQS00,quincampoix2002}, unless the dynamics satisfy special assumptions at the boundary of state constraints, e.g, inward pointing qualification assumption \cite{soner1986a,soner1986}, outward pointing condition \cite{frankowska2000} and vanishing on the boundary \cite{bardi2000}. These conditions are, however, restrictive and viscosity solution can therefore be discontinuous in general. \oomit{such assumption is not satisfied and the viscosity solution could be discontinuous. In this framework, \cite{frankowska2000} introduced another controllability assumption, i.e. outward pointing condition. Under this assumption it is still possible to characterize the value function as the unique lower semi-continuous solution of an HJI equation. However, discontinuity hampers the application of existing numerical methods.} Recently, without requiring such assumptions, \cite{bokanowski2010} infers a modified Hamilton-Jacobi equation and considers reachability problems over finite time horizons for state-constrained problems with control inputs. The modified Hamilton-Jacobi equation exhibits a unqiue continuous viscosity solution. Based on such Hamilton-Jacobi formulation in \cite{bokanowski2010}, \cite{margellos2011} studies the finite-time reach-avoid games for state-constrained systems. \cite{fisac2015} further  investigates differential games over finite time horizons where the  target  set,  the state constraint set, and dynamics  are allowed  to  be  time-varying. Recently, \cite{grune2011} considers the generation of the region of attraction over the infinite time horizon. The region of attraction here is the set of initial states that are controllable in that they can be driven, using an admissible control while respecting a set of state constraints, to asymptotically approach an equilibrium state. \cite{xue2019} studies the problem of computing robust invariant sets over the infinite time horizon for state-constrained perturbed nonlinear systems without control inputs within the Hamilton-Jacobi reachability framework, where a robust invariant set is a set of states such that every possible trajectory starting from it never violates the given state constraint, irrespective of the actual perturbation. In \cite{xue2019} the maximal robust invariant set is described as the zero level set of the unique Lipschitz-continuous viscosity solution to a Hamilton-Jacobi-Bellman (HJB) equation. However, to the best of our knowledge there is no previous work on the use of Hamilton-Jacobi equations having continuous viscosity solutions to address the infinite time reach-avoid differential game for state-constrained systems.

In this paper we therefore extend the Hamilton-Jacobi formulation from \cite{xue2019} to address infinite time reach-avoid differential games for state-constrained systems. In the reach-avoid game, we consider computation of the lower robust controlled invariant set and the upper robust controlled invariant set. The lower robust controlled invariant set is a set of initial states such that  for any finite time horizon, there exists a nonanticipative strategy for the ego player which makes the system satisfy the specified safety specification, irrespective of actions of the mutual other player. The upper robust controlled invariant set is a set of initial states such that for any nonanticipative strategies of the mutual other player and any finite time horizon, there exists a action for the ego player which makes the system satisfy the specified safety specification. We characterize the lower robust controlled invariant set as the zero level set of a unique bounded Lipschitz continuous viscosity solution to a Hamilton-Jacobi equation with sup-inf Hamiltonian and the upper robust controlled invariant set as the zero level set of a unique bounded Lipschitz continuous viscosity solution to a Hamilton-Jacobi equation with inf-sup Hamiltonian, respectively. Under the classical Isaacs condition, these two sets coincide. The continuity of viscosity solutions facilitates the use of existing numerical methods to solve the associated Hamilton-Jacobi equations. An example adopted from modern Moore-Greitzer jet engine model \cite{sassi2012} is employed to demonstrate our approach.

This paper is structured as follows: Section \ref{Pre} gives a detailed introduction of the differential game of interest in this paper, including the notion of lower and upper robust controlled invariant sets. Section \ref{HJB} formulates the computation of both lower and upper robust controlled invariant sets within the framework of Hamilton-Jacobi type partial differential equation. After demonstrating our approach on one example in Section \ref{examples}, we conclude this paper in Section \ref{con}.

\section{Differential Game Formulation}
\label{Pre}
In this section we introduce the definitions and notations which are employed in the rest of this paper.  The following basic notations will be used in what follows: $\mathbb{R}^n$ denotes
the set of n-dimensional real vectors. $\|\bm{x}\|$ denotes the 2-norm, i.e., $\|\bm{x}\|:=\sqrt{\sum_{i=1}^n x_i^2}$, where $\bm{x}=(x_1,\ldots,x_n)$. $C^{\infty}(\mathbb{R}^n)$ denotes the set of smooth functions over $\mathbb{R}^n$. Vectors are denoted by boldface letters.

We consider a reach-avoid differential game with dynamics given by
\begin{equation}
\label{system}
\begin{cases}
&\dot{\bm{x}}(s)=\bm{f}(\bm{x}(s),\bm{u}(s),\bm{d}(s))\\
&\bm{x}(0)=\bm{x}_0\in \mathcal{X}.
\end{cases}
\end{equation}
Here we assume that $\bm{f}(\bm{x},\bm{u},\bm{d}): \mathbb{R}^n\times U\times D\mapsto \mathbb{R}^n$ is continuous over $\bm{x}$, $\bm{u}$ and $\bm{d}$, and locally Lipschitz in $\bm{x}$ uniformly in $\bm{u}$ and $\bm{d}$. The sets $\mathcal{X}$, $U$ and $D$ are compact subsets of finite dimensional spaces $\mathbb{R}^n$, $\mathbb{R}^m$ and $\mathbb{R}^l$ respectively, and the controls $\bm{u}(\cdot):[0,\infty)\mapsto U$ and $\bm{d}(\cdot):[0,\infty)\mapsto D$ are measurable functions. We define
\begin{equation*}
\begin{split}
&\mathcal{U}=\{\bm{u}(\cdot):[0,\infty)\mapsto U, \text{measurable}\} \text{~and}\\
&\mathcal{D}=\{\bm{d}(\cdot):[0,\infty)\mapsto D, \text{measurable}\}
\end{split}
\end{equation*}
as the respective sets of control functions.

 As point-wise limits of measurable functions are measurable, $\mathcal{U}$ is a closed subset, and consequently compact in the topology of point-wise convergence \cite{platzer2017}. Analogously, $\mathcal{D}$ is also compact in the topology of point-wise convergence. Throughout this paper we will investigate the situation in which the ego player wants to control the system to stay within a set  while the mutual other player attempts to prevent this. For this reason, we will usually interpret $\bm{u}(\cdot)$ as a control action while considering $\bm{d}(\cdot)$ as an adversarial perturbation. The trajectory  of system \eqref{system} under the control of $\bm{u}(\cdot)\in \mathcal{U}$ and $\bm{d}(\cdot)\in \mathcal{D}$ is denoted by $\bm{\phi}_{\bm{x}_0}^{\bm{u},\bm{d}}(\cdot): \mathbb{R}\mapsto \mathbb{R}^n$ with $\bm{\phi}_{\bm{x}_0}^{\bm{u},\bm{d}}(0)=\bm{x}_0$. The game is investigated in the framework of non-anticipative strategy, whose concept is formally presented in Definition \ref{nonanticipative}.
\begin{definition}
\label{nonanticipative}
We say that a map $\bm{\alpha}(\cdot): \mathcal{D}\mapsto \mathcal{U}$ is a non-anticipative strategy (for the ego player) if it satisfies the following condition:

For $\bm{d}_1(\cdot)$, $\bm{d}_2(\cdot)\in \mathcal{D}$ with $\bm{d}_1(t)=\bm{d}_2(t)$ almost everywhere on $t\in [0,s]$ for any $s\geq 0$, $\bm{\alpha}(\bm{d}_1)(t)$ and $\bm{\alpha}(\bm{d}_2)(t)$ coincide almost everywhere on $[0,s]$.
The set of non-anticipative strategies $\bm{\alpha}(\cdot)$ for the ego player is denoted by $\Gamma$.

Non-anticipative strategies for the other player $\bm{\beta}(\cdot):\mathcal{U}\mapsto \mathcal{D}$ are defined similarly. Its corresponding set is denoted by $\Delta$.
\end{definition}

According to Remark 5.9 in \cite{platzer2017}, $\Gamma$ and $\Delta$ are compact in the product topology of point-wise convergence. Based on the non-anticipative strategies in Definition \ref{nonanticipative}, we define two types of robust controlled invariant sets, i.e., lower robust controlled invariant set and upper robust controlled invariant set.

 \begin{definition}
 \label{ICC}
 Let $\mathcal{X}_{\epsilon}=\{\bm{x}\in \mathbb{R}^n\mid h(\bm{x})\leq \epsilon\}$ be a set in $\mathbb{R}^n$, where $h(\bm{x})$ is a bounded and locally Lipschitz continuous function in $\mathbb{R}^n$,

1)  The lower robust controlled invariant set $\mathcal{R}^{-}$ of system \eqref{system} is the set of states $\bm{x}$'s such that  for any $ \epsilon >0$ and any $T\geq 0$, there exists a non-anticipative strategy $\bm{\alpha}(\cdot)\in \Gamma$ such that for any perturbation $\bm{d}(\cdot)\in \mathcal{D}$ the corresponding trajectory $\bm{\phi}_{\bm{x}}^{\bm{\alpha}(\bm{d}),\bm{d}}(t)$ stays inside $\mathcal{X}_{\epsilon}$ for $t\in [0,T]$,
 i.e.,
\begin{equation*}
\begin{split}
 \mathcal{R}^{-}=\{\bm{x}\in \mathbb{R}^n\mid &\forall \epsilon>0, \forall T\geq 0, \exists \bm{\alpha}(\cdot) \in \Gamma, \forall \bm{d}(\cdot) \in \mathcal{D}, \forall t\in [0,T], \bm{\phi}_{\bm{x}}^{\bm{\alpha}(\bm{d}),\bm{d}}(t)\in \mathcal{X}_{\epsilon}\}.
 \end{split}
 \end{equation*} 
 2). The upper robust controlled invariant set $\mathcal{R}^+$ of system \eqref{system} is the set of states $\bm{x}$'s such that for any $T\geq 0$ and  any $\epsilon>0$ and any non-anticipative strategies $\bm{\beta}(\cdot)\in \Delta$, there exists a control $\bm{u}(\cdot)\in \mathcal{U}$ such that the trajectory $\bm{\phi}_{\bm{x}}^{\bm{u},\bm{\beta}(\bm{u})}(t)$ stays inside $\mathcal{X}_{\epsilon}$ for $t\in  [0,T]$, i.e.,
\begin{equation*}
\begin{split}
\mathcal{R}^{+}=\{\bm{x}\in \mathbb{R}^n\mid & \forall \epsilon>0, \forall T\geq 0, \forall \bm{\beta}(\cdot) \in \Delta, \exists\bm{u}(\cdot) \in \mathcal{U}, \forall t\in [0,T], \bm{\phi}_{\bm{x}}^{\bm{u},\bm{\beta}(\bm{u})}(t)\in \mathcal{X}_{\epsilon}\}.
 \end{split}
\end{equation*} 
 \end{definition}

Note that the assumption on the boundedness of $h(\bm{x})$ over $\bm{x}\in \mathbb{R}^n$ is not strict since if $h(\bm{x})$ is unbounded, then $h(\bm{x}):=\frac{h(\bm{x})}{1+h^2(\bm{x})}$is bounded and $\mathcal{X}$ is still equal to $\{\bm{x}\in \mathbb{R}^n\mid h(\bm{x})\leq 0\}$.

An immediate conclusion from Definition \ref{ICC} is presented in Corollary \ref{in}.
\begin{corollary}
\label{in}
$\mathcal{R}^-\subseteq \mathcal{X}$ and $\mathcal{R}^{+}\subseteq \mathcal{X}$.
\end{corollary}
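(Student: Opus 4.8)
The plan is to read the conclusion straight out of Definition \ref{ICC} by instantiating the universally quantified parameters at their most convenient values, namely $T=0$, and then passing to the limit in $\epsilon$. Recall first the harmless identification $\mathcal{X}=\mathcal{X}_0=\{\bm{x}\in\mathbb{R}^n\mid h(\bm{x})\le 0\}$ implicit in Definition \ref{ICC}.

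For the first inclusion, fix an arbitrary $\bm{x}\in\mathcal{R}^-$ and an arbitrary $\epsilon>0$. Applying Definition \ref{ICC} with this $\epsilon$ and with $T=0$ yields a non-anticipative strategy $\bm{\alpha}(\cdot)\in\Gamma$ such that for every $\bm{d}(\cdot)\in\mathcal{D}$ the trajectory $\bm{\phi}_{\bm{x}}^{\bm{\alpha}(\bm{d}),\bm{d}}(t)$ lies in $\mathcal{X}_{\epsilon}$ for all $t\in[0,0]=\{0\}$. Evaluating at $t=0$ and using $\bm{\phi}_{\bm{x}}^{\bm{\alpha}(\bm{d}),\bm{d}}(0)=\bm{x}$ gives $\bm{x}\in\mathcal{X}_{\epsilon}$, i.e.\ $h(\bm{x})\le\epsilon$. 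Since $\epsilon>0$ was arbitrary, letting $\epsilon\downarrow 0$ gives $h(\bm{x})\le 0$, hence $\bm{x}\in\mathcal{X}$; this proves $\mathcal{R}^-\subseteq\mathcal{X}$. The argument for $\mathcal{R}^+\subseteq\mathcal{X}$ is structurally identical: for $\bm{x}\in\mathcal{R}^+$ and arbitrary $\epsilon>0$, Definition \ref{ICC} with $T=0$ (applied, say, to any fixed $\bm{\beta}(\cdot)\in\Delta$) furnishes a control $\bm{u}(\cdot)\in\mathcal{U}$ with $\bm{\phi}_{\bm{x}}^{\bm{u},\bm{\beta}(\bm{u})}(0)=\bm{x}\in\mathcal{X}_{\epsilon}$, and letting $\epsilon\downarrow 0$ again yields $\bm{x}\in\mathcal{X}$.

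There is essentially no obstacle here; the only point worth a moment's care is that the time interval $[0,T]$ must be nonempty for the instance ``$t=0$'' to be legitimate, which is guaranteed because $T\ge 0$ is permitted and we simply choose $T=0$. Everything else is a direct unwinding of the quantifiers together with the elementary fact that $h(\bm{x})\le\epsilon$ for all $\epsilon>0$ forces $h(\bm{x})\le 0$.
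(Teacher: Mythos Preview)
Your proof is correct and follows essentially the same approach as the paper: both arguments instantiate the universal quantifier over $T$ at $T=0$ so that only the initial state $\bm{\phi}_{\bm{x}}^{\cdot,\cdot}(0)=\bm{x}$ matters, and then exploit the arbitrariness of $\epsilon>0$. The only cosmetic difference is that the paper phrases this as a proof by contradiction (assuming $h(\bm{x})=\epsilon_1>0$ and exhibiting a violating $\epsilon<\epsilon_1$), whereas you argue directly by letting $\epsilon\downarrow 0$.
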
 
\begin{proof}
Let $\bm{x}\in \mathcal{R}^-$ but $\bm{x}\notin \mathcal{X}$. Obviously, there exists $\epsilon_1>0$ such that $h(\bm{x})=\epsilon_1$. Therefore, 
\[\exists \epsilon<\epsilon_1, \exists T=0, \forall \bm{\alpha}(\cdot) \in \Gamma, \exists \bm{d}(\cdot) \in \mathcal{D}, \exists t\in [0,T], \bm{\phi}_{\bm{x}}^{\bm{\alpha}(\bm{d}),\bm{d}}(t)\notin \mathcal{X}_{\epsilon},\] contradicting $\bm{x}\in \mathcal{R}^-$. Consequently, $\bm{x}\in \mathcal{X}$ and thus $\mathcal{R}^-\subseteq \mathcal{X}$. 

Analogously, we have $\mathcal{R}^{+}\subseteq \mathcal{X}$.
\end{proof}

\section{Characterization of $\mathcal{R}^{\pm}$ Using HJI}
\label{HJB}
In this section we characterize the lower and upper robust controlled invariant sets $\mathcal{R}^{-}$ and $\mathcal{R}^{+}$ using Hamilton-Jacobi equations with  sup-inf and inf-sup Hamiltonians respectively.

In order to obtain Hamilton-Jacobi equations for characterizing these two robust controlled invariant sets $\mathcal{R}^{-}$ and $\mathcal{R}^{+}$, for any solution $\bm{\phi}_{\bm{x}}^{\bm{u},\bm{d}}(\cdot)$ of \eqref{system} with initial value $\bm{x}$ we associate a payoff which depends on $\bm{u}(\cdot)\in \mathcal{U}$ and $\bm{d}(\cdot)\in \mathcal{D}$ and is denoted by
\begin{equation}
\label{J}
J(\bm{x},\bm{u},\bm{d}):=\sup_{t\in[0,\infty)}e^{-\gamma t}h(\bm{\phi}_{\bm{x}}^{\bm{u},\bm{d}}(t)),
\end{equation}
where $\gamma$ is a scalar constant valued in $(0,\infty)$.

\begin{remark}
Note that we only assume that $\bm{f}(\bm{x},\bm{u},\bm{d})$ in system \eqref{system} is locally Lipschitz continuous over $\bm{x}$ uniformly in $\bm{u}\in U$ and $\bm{d}\in D$, this generally can not guarantee the global existence of the Caratheodory solution $\bm{\phi}_{\bm{x}}^{\bm{u},\bm{d}}(t)$ over $t\in [0,\infty)$ for every $\bm{x}\in \mathbb{R}^n$.
Thanks to  Kirszbraun's extension theorem for Lipschitz maps \cite{valentine1945a}, we can construct a global Lipschitz function $\bm{F}(\bm{x},\bm{u},\bm{d})$ such that $\bm{F}(\bm{x},\bm{u},\bm{d})=\bm{f}(\bm{x},\bm{u},\bm{d})$ over $\bm{x}\in B$ and its global Lipschitz constant $L_{\bm{F}}$ is equal to $L_{\bm{f}}$, where $L_{\bm{f}}$ is the Lipschitz constant of the function $\bm{f}(\bm{x},\bm{u},\bm{d})$ over $B$ and $\mathcal{X}\subset B$. For instance, $\bm{F}(\bm{x},\bm{d}):=\inf_{\bm{y}\in B}(\bm{f}(\bm{y},\bm{u},\bm{d})+A L_{\bm{f}}\|\bm{x}-\bm{y}\|)$ satisfies such requirement, where $A$ is an $n-$dimensional vector with each component equaling to one. Since $\bm{F}(\bm{x},\bm{u},\bm{d})=\bm{f}(\bm{x},\bm{u},\bm{d})$ over $\bm{x}\in \mathcal{X}$, the dynamics of the system \eqref{system} and the system $\dot{\bm{x}}=\bm{F}(\bm{x},\bm{u},\bm{d})$ are the same within the set $\mathcal{X}$. From Corollary \ref{in}, we have that the sets $\mathcal{R}^-$ and $\mathcal{R}^+$ in Definition \ref{ICC} under the system $\dot{\bm{x}}=\bm{F}(\bm{x},\bm{u},\bm{d})$ remain the same. Also,  the original system \eqref{system} is sufficient for existing numerical methods to compute $\mathcal{R}^-$ and $\mathcal{R}^+$ on the set $B$ since  $\bm{F}(\bm{x},\bm{u},\bm{d})=\bm{f}(\bm{x},\bm{u},\bm{d})$ over $\bm{x}\in B$. Therefore, for ease exposition we still use the original system \eqref{system} for theoretical analysis in the remainder of this paper with assumed global existence of a unique solution for each $\bm{x}\in \mathbb{R}^n$. In the sequel we continue exploring properties of the function $J(\bm{x},\bm{u},\bm{d})$ in \eqref{J}.
\end{remark}

\begin{lemma}
\label{JJ}
$J(\bm{x},\bm{u},\bm{d})$ in \eqref{J} is continuous over $(\bm{u}(\cdot),\bm{d}(\cdot)) \in \mathcal{U}\times \mathcal{D}$.
\end{lemma}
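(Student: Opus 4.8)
The plan is to reduce everything to the fact that, for fixed initial state $\bm x$, the trajectory map $(\bm u,\bm d)\mapsto \bm\phi_{\bm x}^{\bm u,\bm d}$ depends continuously on the controls when restricted to any bounded time window, and then to exploit the discount factor $e^{-\gamma t}$ in \eqref{J} so that the portion of the running payoff beyond a large horizon $T_0$ becomes negligible.

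First, fix $\bm x$ and let $(\bm u_n,\bm d_n)\to(\bm u,\bm d)$ in $\mathcal U\times\mathcal D$, i.e.\ $\bm u_n\to\bm u$ and $\bm d_n\to\bm d$ pointwise (a.e.). By the reformulation in the Remark above we may assume $\bm f$ is globally Lipschitz in $\bm x$ with constant $L_{\bm f}$, uniformly in $(\bm u,\bm d)$, and that solutions exist on $[0,\infty)$; a Gronwall estimate then confines all trajectories issuing from $\bm x$, on any interval $[0,T]$, to a compact ball $K$ depending only on $\|\bm x\|$ and $T$, on which $\bm f$ is bounded by some $M_{\bm f}$ and, being continuous on the compact set $K\times U\times D$, uniformly continuous. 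Writing $\bm\phi_n:=\bm\phi_{\bm x}^{\bm u_n,\bm d_n}$ and $\bm\phi:=\bm\phi_{\bm x}^{\bm u,\bm d}$ in integral form and inserting $\pm\,\bm f(\bm\phi(s),\bm u_n(s),\bm d_n(s))$, one obtains
\[
\|\bm\phi_n(t)-\bm\phi(t)\|\le L_{\bm f}\!\int_0^t\!\|\bm\phi_n(s)-\bm\phi(s)\|\,ds+\varepsilon_n,\qquad
\varepsilon_n:=\int_0^T\!\big\|\bm f(\bm\phi(s),\bm u_n(s),\bm d_n(s))-\bm f(\bm\phi(s),\bm u(s),\bm d(s))\big\|\,ds .
\]
The integrand defining $\varepsilon_n$ tends to $0$ for a.e.\ $s$ by continuity of $\bm f$ in $(\bm u,\bm d)$ and is dominated by $2M_{\bm f}$, so $\varepsilon_n\to 0$ by dominated convergence; Gronwall's inequality then gives $\sup_{t\in[0,T]}\|\bm\phi_n(t)-\bm\phi(t)\|\le \varepsilon_n e^{L_{\bm f}T}\to 0$.

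Next, set $M:=\sup_{\bm y\in\mathbb R^n}|h(\bm y)|<\infty$ and abbreviate $a_i(t):=e^{-\gamma t}h(\bm\phi_{\bm x}^{\bm u_i,\bm d_i}(t))$. From $|\sup a_1-\sup a_2|\le\sup|a_1-a_2|$ together with the pointwise bound $|a_i(t)|\le Me^{-\gamma t}$, for every $T_0>0$ we get
\[
\big|J(\bm x,\bm u_1,\bm d_1)-J(\bm x,\bm u_2,\bm d_2)\big|
\;\le\;\sup_{t\in[0,T_0]}e^{-\gamma t}\big|h(\bm\phi_{\bm x}^{\bm u_1,\bm d_1}(t))-h(\bm\phi_{\bm x}^{\bm u_2,\bm d_2}(t))\big|\;+\;2Me^{-\gamma T_0},
\]
since on the tail $t\ge T_0$ one has $|a_1(t)-a_2(t)|\le 2Me^{-\gamma T_0}$. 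Given $\delta>0$, fix $T_0$ with $2Me^{-\gamma T_0}<\delta/2$, and apply this with $(\bm u_1,\bm d_1)=(\bm u,\bm d)$, $(\bm u_2,\bm d_2)=(\bm u_n,\bm d_n)$: by Step 1 the trajectories converge uniformly on $[0,T_0]$ while remaining in the compact set $K$, and $h$ is uniformly continuous on $K$, so the first term tends to $0$. Hence $\limsup_n|J(\bm x,\bm u_n,\bm d_n)-J(\bm x,\bm u,\bm d)|\le\delta/2<\delta$, and since $\delta$ was arbitrary, $J$ is continuous at $(\bm u,\bm d)$.

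The main obstacle is Step 1: passing from mere pointwise convergence of the controls to uniform convergence of the induced trajectories. This is exactly where local Lipschitz continuity of $\bm f$ in $\bm x$ (for the Gronwall loop), continuity of $\bm f$ in $(\bm u,\bm d)$, and uniform boundedness of $\bm f$ on the compact tube $K\times U\times D$ (so that dominated convergence applies to $\varepsilon_n$) are all used together; one must also check that the enclosing compact set $K$ can be chosen independently of $n$, which is precisely why the global-Lipschitz reformulation of the Remark is invoked. The truncation in Step 2 is then routine once one notices that it is $|a_1-a_2|$, rather than the individual suprema, that has to be controlled on the tail $t\ge T_0$.
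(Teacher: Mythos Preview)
Your argument is correct and follows essentially the same route as the paper: truncate the time axis using the discount factor so the tail contributes at most $2Me^{-\gamma T_0}$, and on the remaining compact interval use uniform convergence of trajectories together with continuity of $h$. The only difference is that where the paper simply cites an external result (Lemma~5.8 in \cite{platzer2017}) for the passage from pointwise convergence of controls to uniform convergence of trajectories on $[0,T]$, you supply the Gronwall/dominated-convergence argument explicitly.
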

\begin{proof}
Assume that $\lim_{n\rightarrow \infty}\bm{u}_n(t)=\bm{u}(t)$ and $\lim_{n\rightarrow \infty}\bm{d}_n(t)=\bm{d}(t)$ point-wise, where $\bm{u}_n(\cdot)\in \mathcal{U}$ and $\bm{d}_n(\cdot) \in \mathcal{D}$ for $n\geq 1$, we will prove that for every $\epsilon>0$, there exists $N>0$ such that 
\begin{equation*}
|J(\bm{x},\bm{u},\bm{d})-J(\bm{x},\bm{u}_n,\bm{d}_n)|<\epsilon, \forall n>N.
\end{equation*}

Since $h(\bm{x})$ is bounded over $\mathbb{R}^n$, there exists $M\in [0,\infty)$ such that $\|h(\bm{x})\|\leq M$ over $\mathbb{R}^n$. Consequently, we have that for given $\epsilon>0$, there exists $T>0$ such that 
\begin{equation*}
|e^{-\gamma t}h(\bm{\phi}_{\bm{x}}^{\bm{u},\bm{d}}(t))-e^{-\gamma t}h(\bm{\phi}_{\bm{x}}^{\bm{u}_n,\bm{d}_n}(t))|\leq 2Me^{-\gamma t}<\frac{\epsilon}{2},\forall \bm{u}_n(\cdot)\in \mathcal{U}, \forall \bm{d}_n(\cdot)\in \mathcal{D}, \forall t\geq T
\end{equation*}
holds. Therefore,
\begin{equation*}
\begin{split}
&|J(\bm{x},\bm{u},\bm{d})-J(\bm{x},\bm{u}_n,\bm{d}_n)|\\
&\leq \sup_{t\in [0,\infty)}|e^{-\gamma t}h(\bm{\phi}_{\bm{x}}^{\bm{u},\bm{d}}(t))-e^{-\gamma t}h(\bm{\phi}_{\bm{x}}^{\bm{u}_n,\bm{d}_n}(t))|\\
&\leq \sup_{t\in [0,T]}|e^{-\gamma t}h(\bm{\phi}_{\bm{x}}^{\bm{u},\bm{d}}(t))-e^{-\gamma t}h(\bm{\phi}_{\bm{x}}^{\bm{u}_n,\bm{d}_n}(t))|+\frac{\epsilon}{2}, \forall n\geq 1.\\
\end{split}
\end{equation*}

From Lemma 5.8 in \cite{platzer2017} stating that if $\lim_{n\rightarrow \infty}\bm{u}_n(t) =\bm{u}(t)$ and $\lim_{n\rightarrow \infty}\bm{d}_n(t)=\bm{d}(t)$ point-wise, then $\lim_{n\rightarrow \infty}\bm{\phi}_{\bm{x}}^{\bm{u}_n,\bm{d}_n}(t)= \bm{\phi}_{\bm{x}}^{\bm{u},\bm{d}}(t)$ uniformly on $[0,T]$, we finally have that for given $\epsilon>0$, there exists $N>0$ such that
\begin{equation*}
|J(\bm{x},\bm{u},\bm{d})-J(\bm{x},\bm{u}_n,\bm{d}_n)|<\epsilon, \forall n\geq N.
\end{equation*}
\end{proof}

For the payoff $J(\bm{x},\bm{u},\bm{d})$, we respectively define the lower value function $V^-$ and upper value function $V^+$ as follows:
\begin{equation}
\label{vminus}
V^{-}(\bm{x}):=\inf_{\bm{\alpha}(\cdot) \in \Gamma}\sup_{\bm{d}(\cdot)\in \mathcal{D}}J(\bm{x},\bm{\alpha}(\bm{d}),\bm{d})~\text{and}
\end{equation}
\begin{equation}
\label{vplus}
V^{+}(\bm{x}):=\sup_{\bm{\beta}(\cdot)\in \Delta}\inf_{\bm{u}(\cdot)\in \mathcal{U}}J(\bm{x},\bm{u},\bm{\beta}(\bm{u})).
\end{equation}
We show that the zero level sets of the lower value function $V^-$  and the upper value function $V^+$  are respectively the lower robust controlled invariant set $\mathcal{R}^-$ and the upper robust controlled invariant set $\mathcal{R}^+$, i.e. $\mathcal{R}^-=\{\bm{x}\in \mathbb{R}^n\mid V^{-}(\bm{x})=0\}$ and $\mathcal{R}^+=\{\bm{x}\in \mathbb{R}^n\mid V^{+}(\bm{x})=0\}$. Before justifying this statement, we need an intermediate proposition stating that both the lower value function $V^{-}$ and the upper value function $V^+$ are positive and bounded over $\mathbb{R}^n$. 
\begin{proposition}
\label{bounded}
$V^{-}(\bm{x})$ is non-negative and bounded over $\bm{x}\in \mathbb{R}^n$. Analogously, $V^+(\bm{x})$ is non-negative and bounded over $\bm{x}\in \mathbb{R}^n$ as well.
\end{proposition}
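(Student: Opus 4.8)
The plan is to establish a uniform two-sided bound $0 \le J(\bm{x},\bm{u},\bm{d}) \le M$ for every $\bm{x} \in \mathbb{R}^n$ and every pair $(\bm{u}(\cdot),\bm{d}(\cdot)) \in \mathcal{U}\times\mathcal{D}$, and then to pass this bound through the $\inf$-$\sup$ and $\sup$-$\inf$ operators that define $V^-$ and $V^+$.

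First I would recall that, since $h$ is bounded on $\mathbb{R}^n$, there is $M \in [0,\infty)$ with $|h(\bm{y})| \le M$ for all $\bm{y} \in \mathbb{R}^n$ (as already used in the proof of Lemma \ref{JJ}), and that under the standing assumption of global existence each trajectory $\bm{\phi}_{\bm{x}}^{\bm{u},\bm{d}}(t)$ is defined for all $t \ge 0$. The upper bound is then immediate: for every $t \ge 0$ we have $e^{-\gamma t} h(\bm{\phi}_{\bm{x}}^{\bm{u},\bm{d}}(t)) \le M e^{-\gamma t} \le M$, whence $J(\bm{x},\bm{u},\bm{d}) = \sup_{t\in[0,\infty)} e^{-\gamma t} h(\bm{\phi}_{\bm{x}}^{\bm{u},\bm{d}}(t)) \le M$.

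The non-negativity is the only mildly delicate point, and it exploits the exponential discounting rather than the value at $t=0$ (which would only give $J \ge h(\bm{x})$, possibly negative). For any $\delta > 0$, choose $T_\delta > 0$ large enough that $M e^{-\gamma T_\delta} < \delta$; since $\gamma > 0$ this is always possible. Then $e^{-\gamma T_\delta} h(\bm{\phi}_{\bm{x}}^{\bm{u},\bm{d}}(T_\delta)) \ge -M e^{-\gamma T_\delta} > -\delta$, so the supremum defining $J(\bm{x},\bm{u},\bm{d})$ is at least $-\delta$. As $\delta > 0$ is arbitrary, $J(\bm{x},\bm{u},\bm{d}) \ge 0$. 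Equivalently, $e^{-\gamma t} h(\bm{\phi}_{\bm{x}}^{\bm{u},\bm{d}}(t)) \to 0$ as $t \to \infty$ uniformly in the trajectory, so the supremum cannot fall below $0$.

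Finally, combining $0 \le J(\bm{x},\bm{u},\bm{d}) \le M$ with the definitions in \eqref{vminus} and \eqref{vplus}: for any fixed $\bm{\alpha}(\cdot) \in \Gamma$ we get $0 \le \sup_{\bm{d}(\cdot)\in\mathcal{D}} J(\bm{x},\bm{\alpha}(\bm{d}),\bm{d}) \le M$, and taking the infimum over $\bm{\alpha}(\cdot) \in \Gamma$ yields $0 \le V^-(\bm{x}) \le M$; symmetrically, for any fixed $\bm{\beta}(\cdot) \in \Delta$ we get $0 \le \inf_{\bm{u}(\cdot)\in\mathcal{U}} J(\bm{x},\bm{u},\bm{\beta}(\bm{u})) \le M$, and taking the supremum over $\bm{\beta}(\cdot) \in \Delta$ yields $0 \le V^+(\bm{x}) \le M$. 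These bounds are uniform in $\bm{x}$, so both value functions are non-negative and bounded on $\mathbb{R}^n$. I do not anticipate any genuine obstacle; the only care needed is to base the lower bound on the decay of the discounted payoff at infinity (which is where $\gamma>0$ enters), not on its initial value.
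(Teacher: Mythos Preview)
Your proof is correct and follows essentially the same approach as the paper: both arguments use the boundedness of $h$ together with the exponential discount $e^{-\gamma t}$ to show $0 \le J(\bm{x},\bm{u},\bm{d}) \le M$ uniformly, and then pass these bounds through the $\inf$-$\sup$ and $\sup$-$\inf$ defining $V^-$ and $V^+$. The paper phrases the lower bound via $\lim_{t\to\infty} e^{-\gamma t} h(\bm{\phi}_{\bm{x}}^{\bm{\alpha}(\bm{d}),\bm{d}}(t)) = 0$, which is exactly the ``equivalently'' remark you make.
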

\begin{proof}
We just prove the statement pertinent to $V^-(\bm{x})$. The similar proof procedure applies to $V^+$ as well.

Since $h(\bm{x})$ is bounded over $\mathbb{R}^n$, we have that $$\lim_{t\rightarrow \infty}e^{-\gamma t}h(\bm{\phi}_{\bm{x}}^{\bm{\alpha}(\bm{d}),\bm{d}}(t))=0, \forall \bm{\alpha}(\cdot)\in \Gamma,
\forall \bm{d}(\cdot)\in \mathcal{D}, \forall \bm{x}\in \mathbb{R}^n.$$
This implies that $J(\bm{x},\bm{\alpha}(\bm{d}),\bm{d})\geq 0, \forall \bm{\alpha}(\cdot)\in \Gamma, \forall \bm{d}(\cdot)\in \mathcal{D},\forall \bm{x}\in \mathbb{R}^n.$
Thus, \[\sup_{\bm{d}(\cdot)\in \mathcal{D}}J(\bm{x},\bm{\alpha}(\bm{d}),\bm{d})\geq 0, \forall \bm{\alpha}(\cdot)\in \Gamma, \forall \bm{x}\in \mathbb{R}^n.\]
 Consequently, $V^{-}(\bm{x})\geq 0$ for $\bm{x}\in \mathbb{R}^n$.

The boundedness of $V^-$ is guaranteed by the fact that $$J(\bm{x},\bm{\alpha}(\bm{d}),\bm{d})\leq M, \forall \bm{\alpha}(\cdot)\in \Gamma, \forall \bm{d}(\cdot)\in \mathcal{D}, \forall \bm{x}\in \mathbb{R}^n,$$ where $M$ is a positive value such that
$|h(\bm{x})|\leq M$ over $\bm{x}\in \mathbb{R}^n$. Thus,
$V^-(\bm{x})\leq M$ over $\bm{x}\in \mathbb{R}^n.$
\end{proof}

\begin{lemma}
\label{inclusion}
$\mathcal{R}^{-}=\{\bm{x}\mid V^{-}(\bm{x})= 0\}$ and
$\mathcal{R}^{+}= \{\bm{x}\mid V^{+}(\bm{x})=0\}$.
\end{lemma}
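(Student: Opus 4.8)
The plan is to prove each identity by double inclusion. Since the statements for $(\mathcal{R}^-,V^-)$ and $(\mathcal{R}^+,V^+)$ are mirror images of one another — obtained by swapping the two players and exchanging $\inf_{\bm\alpha\in\Gamma}\sup_{\bm d\in\mathcal D}$ with $\sup_{\bm\beta\in\Delta}\inf_{\bm u\in\mathcal U}$ — I would carry out the argument for $\mathcal{R}^-$ in detail and remark that the $\mathcal{R}^+$ case follows \emph{mutatis mutandis}. Two cheap observations drive everything. First, by Proposition~\ref{bounded} we already know $V^-\ge 0$, so it suffices to show that $\bm x\in\mathcal{R}^-$ is equivalent to $V^-(\bm x)\le 0$. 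Second, the discount factor provides a quantitative dictionary between confinement of a trajectory to $\mathcal{X}_\epsilon$ on $[0,T]$ and smallness of $J$: for $t\in[0,T]$ one has $h(\bm\phi(t))\le e^{\gamma T}J(\bm x,\bm u,\bm d)$ (multiply $e^{-\gamma t}h(\bm\phi(t))\le J$ by $e^{\gamma t}\le e^{\gamma T}$); conversely, on $[0,T]$, $e^{-\gamma t}h(\bm\phi(t))\le\max\{h(\bm\phi(t)),0\}$ since $e^{-\gamma t}\in(0,1]$; and on the tail $[T,\infty)$, $e^{-\gamma t}h(\bm\phi(t))\le Me^{-\gamma T}$ by boundedness of $h$ by $M$.

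For $\mathcal{R}^-\subseteq\{V^-=0\}$: fix $\bm x\in\mathcal{R}^-$ and $\delta>0$, choose $T$ with $Me^{-\gamma T}\le\delta/2$, and set $\epsilon:=\delta/2>0$. Definition~\ref{ICC} yields a strategy $\bm\alpha\in\Gamma$ with $h(\bm\phi_{\bm x}^{\bm\alpha(\bm d),\bm d}(t))\le\epsilon$ for all $\bm d\in\mathcal D$ and $t\in[0,T]$. Splitting the supremum defining $J$ at $t=T$ and invoking the estimates above gives $J(\bm x,\bm\alpha(\bm d),\bm d)\le\delta/2$ uniformly in $\bm d$, hence $\sup_{\bm d}J(\bm x,\bm\alpha(\bm d),\bm d)\le\delta/2$ and $V^-(\bm x)\le\delta/2$. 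Letting $\delta\downarrow0$ gives $V^-(\bm x)\le0$, so $V^-(\bm x)=0$.

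For $\{V^-=0\}\subseteq\mathcal{R}^-$: fix $\bm x$ with $V^-(\bm x)=0$ and arbitrary $\epsilon>0$, $T\ge0$. Since $\inf_{\bm\alpha\in\Gamma}\sup_{\bm d\in\mathcal D}J(\bm x,\bm\alpha(\bm d),\bm d)=0$, there is a near-optimal $\bm\alpha\in\Gamma$ with $\sup_{\bm d}J(\bm x,\bm\alpha(\bm d),\bm d)<\epsilon e^{-\gamma T}$; then for every $\bm d\in\mathcal D$ and $t\in[0,T]$, $h(\bm\phi_{\bm x}^{\bm\alpha(\bm d),\bm d}(t))\le e^{\gamma T}J(\bm x,\bm\alpha(\bm d),\bm d)<\epsilon$, i.e.\ the trajectory stays in $\mathcal{X}_\epsilon$ on $[0,T]$. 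As $\epsilon,T$ were arbitrary, $\bm x\in\mathcal{R}^-$. The $\mathcal{R}^+$ identity is obtained by repeating both steps with the strategy value $\bm\alpha(\bm d)$ replaced by a control $\bm u$ responding to a fixed $\bm\beta\in\Delta$, using that $V^+(\bm x)=0$ together with $V^+\ge0$ forces $\inf_{\bm u\in\mathcal U}J(\bm x,\bm u,\bm\beta(\bm u))=0$ for \emph{every} $\bm\beta\in\Delta$.

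I expect the only genuine subtlety to be that neither the infimum in $V^-$ nor the supremum in $V^+$ need be attained, so one cannot argue with an ``optimal'' strategy realizing the value $0$; the whole argument must instead be routed through near-optimal strategies and the universal quantifier ``$\forall\epsilon>0,\,\forall T\ge0$'' built into Definition~\ref{ICC}, with $\gamma$ supplying the exact conversion $\eta\leftrightarrow\epsilon e^{-\gamma T}$ between ``value below $\eta$'' and ``confined to $\mathcal{X}_\epsilon$ up to time $T$''. The boundedness of $h$, recorded in Proposition~\ref{bounded}, is precisely what makes the tail over $[T,\infty)$ negligible, and it is used in both inclusions.
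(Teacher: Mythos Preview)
Your proposal is correct and follows essentially the same approach as the paper: both argue by double inclusion, exploiting nonnegativity of $V^\pm$ (Proposition~\ref{bounded}) and the conversion $e^{-\gamma t}h(\bm\phi(t))\leftrightarrow h(\bm\phi(t))$ furnished by the discount factor, and both treat the $\mathcal{R}^+$ case as a mirror image. Your handling of $\mathcal{R}^-\subseteq\{V^-=0\}$ is in fact slightly more careful than the paper's: you explicitly split the supremum in $J$ at a finite $T$ chosen so that the tail $Me^{-\gamma T}$ is below $\delta/2$, whereas the paper writes $\sup_{t\in[0,\infty)}h(\bm\phi_{\bm x}^{\bm\alpha(\bm d),\bm d}(t))\le\epsilon$ directly, glossing over the fact that the strategy $\bm\alpha$ supplied by Definition~\ref{ICC} may depend on $T$.
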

\begin{proof}
1. For the statement $\mathcal{R}^{-}=\{\bm{x}\mid V^{-}(\bm{x})= 0\}$, we first prove $\mathcal{R}^{-}\subseteq\{\bm{x}\mid V^{-}(\bm{x})= 0\}$.

Consider $\bm{x}\in \mathcal{R}^{-}$.  It implies $\sup_{\bm{d}(\cdot)\in \mathcal{D}}\sup_{t\in [0,\infty)}h(\bm{\phi}_{\bm{x}}^{\bm{\alpha}(\bm{d}),\bm{d}}(t))\leq \epsilon$ and consequently $\sup_{\bm{d}(\cdot)\in \mathcal{D}}\sup_{t\in [0,\infty)}e^{-\gamma t}h(\bm{\phi}_{\bm{x}}^{\bm{\alpha}(\bm{d}),\bm{d}}(t))\leq \epsilon.$ Thus,
\begin{equation}
\begin{split}
V^{-}(\bm{x})&=\inf_{\bm{\alpha}(\cdot) \in \Gamma} \sup_{\bm{d}(\cdot)\in \mathcal{D}}J(\bm{x},\bm{\alpha}(\bm{d}),\bm{d})\leq \sup_{\bm{d}(\cdot)\in \mathcal{D}}J(\bm{x},\bm{\alpha}(\bm{d}),\bm{d})\leq \epsilon.
\end{split}
\end{equation}
Since $\epsilon$ is an arbitrary positive number, $V^-(\bm{x})\leq 0$. In addition, according to Proposition \ref{bounded} which states that $V^-(\bm{x})\geq 0$ over $\mathbb{R}^n$, we conclude that $\mathcal{R}^{-}\subseteq\{\bm{x}\in \mathbb{R}^n\mid V^{-}(\bm{x})=0\}.$

Next, we prove that $\{\bm{x}\in \mathbb{R}^n\mid V^{-}(\bm{x})=0\}\subseteq \mathcal{R}^{-}.$ 

Assume that $\bm{x}_0\in \{\bm{x}\in \mathbb{R}^n\mid V^{-}(\bm{x})=0\}$ but $\bm{x}_0\notin \mathcal{R}^-$.  Therefore, we have 
\[\exists \epsilon>0, \exists T\geq 0, \forall \bm{\alpha}(\cdot)\in \Gamma, \exists \bm{d}(\cdot)\in \mathcal{D}, \exists t\in [0,T], \bm{\phi}_{\bm{x}_0}^{\bm{\alpha}(\bm{d}),\bm{d}}(t)\notin \mathcal{X}_{\epsilon}.\]
Therefore, $\sup_{t\in [0,\infty)}e^{-\gamma t} h(\bm{\phi}_{\bm{x}_0}^{\bm{\alpha}(\bm{d}),\bm{d}}(t))\geq e^{-\gamma T} \epsilon$ for $\bm{\alpha}(\cdot)\in \Gamma$ and consequently 
\[\inf_{\bm{\alpha(\cdot)}\in \Gamma}\sup_{\bm{d}(\cdot)\in \mathcal{D}}\sup_{t\in [0,\infty)}e^{-\gamma t} h(\bm{\phi}_{\bm{x}_0}^{\bm{\alpha}(\bm{d}),\bm{d}}(t))\geq e^{-\gamma T} \epsilon,\] contradicting $V^{-}(\bm{x}_0)=0$. Thus, $\bm{x}_0\in \mathcal{R}^-$ and further $\{\bm{x}\mid V^-(\bm{x}_0)=0\} \subseteq \mathcal{R}^-$.

In summary, we have $\mathcal{R}^{-}=\{\bm{x}\in \mathbb{R}^n\mid V^{-}(\bm{x})=0\}$.

2. We prove the second statement that $\mathcal{R}^{+}\subseteq \{\bm{x}\mid V^+(\bm{x})=0\}$. Let $\bm{x}\in \mathcal{R}^{+}$ and $V^+(\bm{x})=\delta>0$. We will derive a contradiction. Due to $V^+(\bm{x})=\delta>0$, there exists $\bm{\beta}_1(\cdot)\in\Delta$ such that $\inf_{\bm{u}(\cdot)\in \mathcal{U}}J(\bm{x},\bm{u},\bm{\beta}_1(\bm{u}))>\frac{\delta}{2}$, implying that $J(\bm{x},\bm{u},\bm{\beta}_1(\bm{u}))>\frac{\delta}{2}$ for all $\bm{u}(\cdot)\in \mathcal{U}$. Due to the fact that there exists $T'>0$ such that 
\begin{equation*} e^{-\gamma T'}h(\bm{\phi}_{\bm{x}}^{\bm{u},\bm{\beta}(\bm{u})}(T')) \leq   e^{-\gamma T'}M\leq \frac{\delta}{2}, \forall \bm{u}(\cdot)\in \mathcal{U}, \forall \bm{\beta}(\cdot)\in \Delta,
\end{equation*}
where $M$ is a positive value such that $|h(\bm{x})|\leq M$ over $\bm{x}\in \mathbb{R}^n$. 
 there exists $T_{\bm{u}}\in [0,T']$  for $\bm{u}(\cdot) \in \mathcal{U}$ such that 
\begin{equation*}
e^{-\gamma T_{\bm{u}}}h(\bm{\phi}_{\bm{x}}^{\bm{u},\bm{\beta}_1(\bm{u})}(T_{\bm{u}}))>\frac{\delta}{2}
\end{equation*}and therefore, $\bm{\phi}_{\bm{x}}^{\bm{u},\bm{\beta}_1(\bm{u})}(T_{\bm{u}})\notin \mathcal{X}_{\frac{\delta}{2}}$ for $\bm{u}(\cdot) \in \mathcal{U}$, contradicting $\bm{x}\in \mathcal{R}^{+}$. $\mathcal{R}^{+}\subseteq\{\bm{x}\mid V^{+}(\bm{x})\leq 0\}$ holds. In addition, according to Proposition \ref{bounded} which states that $V^+(\bm{x})\geq 0$ for $\bm{x}\in \mathbb{R}^n$, we have $\mathcal{R}^{+}\subseteq\{\bm{x}\in \mathbb{R}^n\mid V^+(\bm{x})=0\}.$

Next, we show that $\{\bm{x}\in \mathbb{R}^n\mid V^+(\bm{x})=0\}\subseteq \mathcal{R}^{+}$. Let $V^+(\bm{x})=0$ but $\bm{x}\notin \mathcal{R}^{+}$.
According to the concept of $\mathcal{R}^{+}$ in Definition \ref{ICC}, we have that 
\begin{equation*}
\exists \epsilon >0, \exists T\geq 0, \exists \bm{\beta}(\cdot)\in \Delta,\forall \bm{u}(\cdot)\in \mathcal{U}, \exists t\in [0,T], h(\bm{\phi}_{\bm{x}}^{\bm{u},\bm{\beta}(\bm{u})}(t))>\epsilon. 
\end{equation*}
Therefore, $\sup_{\bm{\beta}(\cdot)\in \Delta}\inf_{\bm{u}(\cdot)\in \mathcal{U}}J(\bm{x},\bm{u},\bm{\beta}(\bm{u}))\geq e^{-\gamma T}\epsilon$, which contradicts $V^{+}(\bm{x})=0$. Therefore, we conclude that $\{\bm{x}\in \mathbb{R}^n\mid V^+(\bm{x})=0\}\subseteq \mathcal{R}^{+}.$

In summary, we have that $\{\bm{x}\in \mathbb{R}^n\mid V^+(\bm{x})=0\}=\mathcal{R}^{+}.$
\end{proof}

According to Lemma \ref{inclusion}, if $V^-(\bm{x})$ and $V^+(\bm{x})$ are computed, we can obtain $\mathcal{R}^-$ and an estimation of $\mathcal{R}^+$ respectively. In order to compute them, we study  more about them and consequently exploit more properties related to them below.
\begin{lemma}
\label{cont}
Both the lower value function $V^{-}$ and the upper value function $V^{+}$ are locally Lipschitz continuous over $\mathbb{R}^n$.
\end{lemma}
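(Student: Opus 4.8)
The plan is to deduce the Lipschitz bound for $V^{-}$ and $V^{+}$ from a single uniform estimate, namely that the payoff $J(\cdot,\bm{u},\bm{d})$ is Lipschitz in its first argument on each compact set with a constant that does not depend on $(\bm{u},\bm{d})\in\mathcal{U}\times\mathcal{D}$. I treat $V^{-}$; the case of $V^{+}$ is identical after interchanging $\Gamma$ with $\Delta$ and $\inf_{\bm{\alpha}}\sup_{\bm{d}}$ with $\sup_{\bm{\beta}}\inf_{\bm{u}}$. \emph{Step 1 (reduction to the payoff).} Fix a compact $K\subseteq\mathbb{R}^n$ and $\bm{x}_1,\bm{x}_2\in K$. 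Given $\eta>0$, choose $\bm{\alpha}_2(\cdot)\in\Gamma$ with $\sup_{\bm{d}\in\mathcal{D}}J(\bm{x}_2,\bm{\alpha}_2(\bm{d}),\bm{d})\le V^{-}(\bm{x}_2)+\eta$. Since $V^{-}(\bm{x}_1)\le\sup_{\bm{d}\in\mathcal{D}}J(\bm{x}_1,\bm{\alpha}_2(\bm{d}),\bm{d})$ and $|\sup f-\sup g|\le\sup|f-g|$,
\[
V^{-}(\bm{x}_1)-V^{-}(\bm{x}_2)\le \sup_{\bm{d}\in\mathcal{D}}\bigl(J(\bm{x}_1,\bm{\alpha}_2(\bm{d}),\bm{d})-J(\bm{x}_2,\bm{\alpha}_2(\bm{d}),\bm{d})\bigr)+\eta\le \sup_{\bm{u},\bm{d}}\bigl|J(\bm{x}_1,\bm{u},\bm{d})-J(\bm{x}_2,\bm{u},\bm{d})\bigr|+\eta .
\]
Letting $\eta\downarrow0$ and swapping $\bm{x}_1,\bm{x}_2$ gives $|V^{-}(\bm{x}_1)-V^{-}(\bm{x}_2)|\le\sup_{\bm{u},\bm{d}}|J(\bm{x}_1,\bm{u},\bm{d})-J(\bm{x}_2,\bm{u},\bm{d})|$, so it suffices to bound the right-hand side by $L_K\|\bm{x}_1-\bm{x}_2\|$.

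\emph{Step 2 (uniform estimate for $J$).} Applying $|\sup_t f(t)-\sup_t g(t)|\le\sup_t|f(t)-g(t)|$ to \eqref{J},
\[
\bigl|J(\bm{x}_1,\bm{u},\bm{d})-J(\bm{x}_2,\bm{u},\bm{d})\bigr|\le \sup_{t\ge0}e^{-\gamma t}\bigl|h(\bm{\phi}_{\bm{x}_1}^{\bm{u},\bm{d}}(t))-h(\bm{\phi}_{\bm{x}_2}^{\bm{u},\bm{d}}(t))\bigr| .
\]
Split the supremum at a horizon $T>0$. Working with the globally Lipschitz extension $\bm{F}$ from the Remark, all trajectories exist on $[0,\infty)$ and grow at most exponentially, so those issued from $K$ remain, on $[0,T]$, inside a fixed compact set $K_T$ on which $h$ has some Lipschitz constant $L_{K_T}$. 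For $t\in[0,T]$, Gr\"onwall's inequality gives $\|\bm{\phi}_{\bm{x}_1}^{\bm{u},\bm{d}}(t)-\bm{\phi}_{\bm{x}_2}^{\bm{u},\bm{d}}(t)\|\le e^{L_{\bm{f}}t}\|\bm{x}_1-\bm{x}_2\|$, whence $e^{-\gamma t}|h(\bm{\phi}_{\bm{x}_1}^{\bm{u},\bm{d}}(t))-h(\bm{\phi}_{\bm{x}_2}^{\bm{u},\bm{d}}(t))|\le L_{K_T}e^{(L_{\bm{f}}-\gamma)t}\|\bm{x}_1-\bm{x}_2\|$. For $t\ge T$, boundedness of $h$ together with Proposition \ref{bounded} (which gives $J\ge0$, so the tail cannot push the supremum below $0$) bound that contribution by $2Me^{-\gamma T}$. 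Hence, for every $T>0$,
\[
|V^{-}(\bm{x}_1)-V^{-}(\bm{x}_2)|\le \max\Bigl\{\,L_{K_T}\,e^{(L_{\bm{f}}-\gamma)_{+}T}\,\|\bm{x}_1-\bm{x}_2\|,\ \ 2Me^{-\gamma T}\Bigr\}.
\]

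\emph{Step 3 (choosing $T$; the crux).} The estimate above already yields continuity, and it is here that I expect the real difficulty to lie. When the discount dominates the expansion rate of the flow (e.g. $\gamma\ge L_{\bm{f}}$), the first term stays proportional to $\|\bm{x}_1-\bm{x}_2\|$ uniformly in $T$, so letting $T\to\infty$ annihilates the tail and leaves a genuine Lipschitz bound; the same is immediate if $h$ happens to be globally Lipschitz. In the general case one must keep $T$ finite and therefore control $L_{K_T}$, which may grow with $T$ as the reachable tube expands; choosing $T$ as a function of $\|\bm{x}_1-\bm{x}_2\|$ so as to balance the two terms always produces a modulus of continuity, and the delicate point is to verify that this modulus is in fact linear on $K$ — that is, to reconcile the exponential divergence $e^{L_{\bm{f}}t}$ of neighbouring trajectories, the discount $e^{-\gamma t}$, and the merely local Lipschitz behaviour of $h$ along the reachable tube. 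I would single out this trade-off as the main obstacle, and I would expect the argument to close only after it is made explicit, exploiting the size of $\gamma$ relative to $L_{\bm{f}}$ (or an extra growth condition on $h$). Once $\sup_{\bm{u},\bm{d}}|J(\bm{x}_1,\bm{u},\bm{d})-J(\bm{x}_2,\bm{u},\bm{d})|\le L_K\|\bm{x}_1-\bm{x}_2\|$ is established, Step 1 and its $V^{+}$-analogue conclude the proof.
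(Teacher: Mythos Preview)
Your strategy coincides with the paper's almost line for line: reduce $|V^-(\bm x_1)-V^-(\bm x_2)|$ to a uniform estimate on $J(\cdot,\bm u,\bm d)$, split the time axis at some horizon $T$, apply Gr\"onwall and the local Lipschitz constant of $h$ on $[0,T]$, and bound the tail by $2Me^{-\gamma T}$ using boundedness of $h$. The paper arrives at precisely your two-term estimate, written there as $L_h e^{L_{\bm f}T}\|\bm x_1-\bm x_2\|+3\epsilon$ with $T$ chosen so that the tail is at most $\epsilon$, and then concludes in one sentence that ``since $\epsilon$ is arbitrary, there is a constant $L$ such that $|V^-(\bm x_1)-V^-(\bm x_2)|\le L\|\bm x_1-\bm x_2\|$, where $L\ge L_h e^{L_{\bm f}T}$.''

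You are right to isolate Step~3 as the crux, and your hesitation is better founded than the paper's closing line: the paper does \emph{not} resolve the trade-off you describe. Since $T$ must satisfy $2Me^{-\gamma T}\le\epsilon$, letting $\epsilon\downarrow0$ forces $T\to\infty$ and hence $L_h e^{L_{\bm f}T}\to\infty$, so no fixed $L$ is produced by that argument. Balancing the two terms in your displayed maximum (equivalently in the paper's sum) yields only a H\"older modulus $C\|\bm x_1-\bm x_2\|^{\gamma/(L_{\bm f}+\gamma)}$ under the stated hypotheses. A genuine Lipschitz bound drops out cleanly only under the extra structure you mention---for instance $h$ globally Lipschitz together with $\gamma\ge L_{\bm f}$, so that $e^{-\gamma t}L_h e^{L_{\bm f}t}\le L_h$ for all $t$ and the tail split becomes unnecessary. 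In short: your approach is the paper's approach, and the gap you flag in Step~3 is real and is present---unacknowledged---in the paper's own proof.
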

\begin{proof}
We just prove the statement related to $V^-$. The one for $V^+$ can be justified following the same procedure.

Let $\epsilon>0$ and choose $\bm{\alpha}_1(\cdot)\in\Gamma$ such that
$V^-(\bm{x}_1)\geq \sup_{\bm{d}(\cdot)\in \mathcal{D}}J(\bm{x}_1,\bm{\alpha}_1(\bm{d}),\bm{d})-\epsilon.$
For $V^-(\bm{x}_2)$, we have that $V^-(\bm{x}_2)\leq \sup_{\bm{d}(\cdot)\in \mathcal{D}}J(\bm{x}_2,\bm{\alpha}_1(\bm{d}),\bm{d}).$
Moreover, we can choose $\bm{d}_1(\cdot)\in\mathcal{D}$ such that
$V^-(\bm{x}_2)\leq J(\bm{x}_2,\bm{\alpha}_1(\bm{d}_1),\bm{d}_1)+\epsilon.$
Therefore,
\begin{equation}
\begin{split}
&V^-(\bm{x}_2)-V^-(\bm{x}_1)\\
&\leq J(\bm{x}_2,\bm{\alpha}_1(\bm{d}_1),\bm{d}_1)-J(\bm{x}_1,\bm{\alpha}_1(\bm{d}_1),\bm{d}_1)+2\epsilon\\
&\leq \sup_{t\in [0,\infty)}e^{-\gamma t}h(\bm{\phi}_{\bm{x}_2}^{\bm{\alpha}_1(\bm{d}_1),\bm{d}_1}(t))-\sup_{t\in [0,\infty)}e^{-\gamma t}h(\bm{\phi}_{\bm{x}_1}^{\bm{\alpha}_1(\bm{d}_1),\bm{d}_1}(t))+2\epsilon\\
&\leq \sup_{t\in [0,\infty)}(e^{-\gamma t}h(\bm{\phi}_{\bm{x}_2}^{\bm{\alpha}_1(\bm{d}_1),\bm{d}_1}(t))-e^{-\gamma t}h(\bm{\phi}_{\bm{x}_1}^{\bm{\alpha}_1(\bm{d}_1),\bm{d}_1}(t)))+2\epsilon.
\end{split}
\end{equation}

Since $h(\bm{x})$ is bounded over $\bm{x}\in \mathbb{R}^n$,  we have that
$\lim_{t\rightarrow \infty}e^{-\gamma t}h(\bm{\phi}_{\bm{x}}^{\bm{\alpha}_1(\bm{d}_1),\bm{d}_1}(t))=0.$ As a consequence, we obtain that there exists $T>0$ such that $$e^{-\gamma t}h(\bm{\phi}_{\bm{x}_2}^{\bm{\alpha}_1(\bm{d}_1),\bm{d}_1}(t))-e^{-\gamma t}h(\bm{\phi}_{\bm{x}_1}^{\bm{\alpha}_1(\bm{d}_1),\bm{d}_1}(t))\leq \epsilon, \forall t\geq T.$$ Therefore, we infer that
\begin{equation}
\begin{split}
&V^-(\bm{x}_2)-V^-(\bm{x}_1)\\
&\leq \max\{\sup_{t\in [0,T]}(e^{-\gamma t}h(\bm{\phi}_{\bm{x}_2}^{\bm{\alpha}_1(\bm{d}_1),\bm{d}_1}(t))-e^{-\gamma t}h(\bm{\phi}_{\bm{x}_1}^{\bm{\alpha}_1(\bm{d}_1),\bm{d}_1}(t))),\\
& \sup_{t\in [T,\infty)} \{e^{-\gamma t}h(\bm{\phi}_{\bm{x}_2}^{\bm{\alpha}_1(\bm{d}_1),\bm{d}_1}(t))-e^{-\gamma t}h(\bm{\phi}_{\bm{x}_1}^{\bm{\alpha}_1(\bm{d}_1),\bm{d}_1}(t))\} \}+2\epsilon\\
&\leq L_h e^{L_{\bm{f}}T}\|\bm{x}_1-\bm{x}_2\|+3\epsilon,
\end{split}
\end{equation}
where  $L_h$ ad $L_{\bm{f}}$ are the Lipschitz constants of $h$ and $\bm{f}$ over $\Omega(B_1)=\{\bm{x}\mid \bm{x}=\bm{\phi}_{\bm{x}_0}^{\bm{\alpha}_1(\bm{d}_1),\bm{d}_1}(t),t\in [0,T],\bm{x}_0\in B_1\}$ with $B_1$ being a compact set in $\mathbb{R}^n$ covering $\bm{x}_1$ and $\bm{x}_2$ respectively. The same argument with the role of $\bm{x}_1$, $\bm{x}_2$ reversed establishes that $$V^-(\bm{x}_2)-V^-(\bm{x}_1)\geq -L_h e^{L_{\bm{f}}T}\|\bm{x}_1-\bm{x}_2\|-3\epsilon.$$ Since $\epsilon$ is arbitrary, there is a constant $L$ such that
$|V^-(\bm{x}_1)-V^-(\bm{x}_2)|\leq L\|\bm{x}_1-\bm{x}_2\|$, where $L$ is larger than or equal to $L_h e^{L_{\bm{f}}T}$.
\end{proof}

Besides the Lipschitz continuity of $V^-$ and $V^+$, both $V^-$ and $V^+$ satisfy the dynamic programming principle.
\begin{lemma}
\label{dyn}
For $\bm{x}\in \mathbb{R}^n$ and $t\geq 0$, we have
\begin{equation}
\label{dy1}
\begin{split}
V^{-}(\bm{x})=\inf_{\bm{\alpha}(\cdot)\in \Gamma}\sup_{\bm{d}(\cdot)\in \mathcal{D}}\max\{e^{-\gamma t}V^{-}(\bm{\phi}_{\bm{x}}^{\bm{\alpha(\bm{d})},\bm{d}}(t)),\sup_{\tau\in [0,t]} e^{-\gamma \tau}h(\bm{\phi}_{\bm{x}}^{\bm{\alpha(\bm{d})},\bm{d}}(\tau))\}
\end{split}
\end{equation}
and
\begin{equation}
\label{dy2}
\begin{split}
V^{+}(\bm{x})=\sup_{\bm{\beta}(\cdot)\in \Delta}\inf_{\bm{u}(\cdot)\in \mathcal{U}}\max\{e^{-\gamma t}V^{+}(\bm{\phi}_{\bm{x}}^{\bm{u},\bm{\beta}(\bm{u})}(t)),\sup_{\tau\in [0,t]} e^{-\gamma \tau} h(\bm{\phi}_{\bm{x}}^{\bm{u},\bm{\beta}(\bm{u})}(\tau))\}.
\end{split}
\end{equation}
\end{lemma}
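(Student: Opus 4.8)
The plan is to establish the identity for $V^-$ in \eqref{dy1}; the one for $V^+$ in \eqref{dy2} then follows by the mirror-image argument, i.e.\ after exchanging $(\Gamma,\mathcal{U},\inf\sup)$ with $(\Delta,\mathcal{D},\sup\inf)$. Write $W(\bm{x})$ for the right-hand side of \eqref{dy1}. Two elementary facts are used throughout. First, the cocycle property $\bm{\phi}_{\bm{x}}^{\bm{u},\bm{d}}(t+s)=\bm{\phi}_{\bm{\phi}_{\bm{x}}^{\bm{u},\bm{d}}(t)}^{\bm{u}(\cdot+t),\bm{d}(\cdot+t)}(s)$, which holds because solutions are unique. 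Second, the induced splitting of the payoff \eqref{J} at time $t$: $J(\bm{x},\bm{u},\bm{d})=\max\{\sup_{\tau\in[0,t]}e^{-\gamma\tau}h(\bm{\phi}_{\bm{x}}^{\bm{u},\bm{d}}(\tau)),\,e^{-\gamma t}J(\bm{\phi}_{\bm{x}}^{\bm{u},\bm{d}}(t),\bm{u}(\cdot+t),\bm{d}(\cdot+t))\}$. I will also use repeatedly that, for a fixed non-anticipative strategy and a fixed $\bm{d}\in\mathcal{D}$, the state $\bm{\phi}_{\bm{x}}^{\bm{\alpha}(\bm{d}),\bm{d}}(t)$ depends on $\bm{d}$ only through $\bm{d}|_{[0,t]}$.

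First I would prove $W(\bm{x})\le V^-(\bm{x})$. Fix $\epsilon>0$ and pick $\bm{\alpha}\in\Gamma$ with $\sup_{\bm{d}\in\mathcal{D}}J(\bm{x},\bm{\alpha}(\bm{d}),\bm{d})\le V^-(\bm{x})+\epsilon$. For $\bm{d}\in\mathcal{D}$ set $\bm{y}_{\bm{d}}=\bm{\phi}_{\bm{x}}^{\bm{\alpha}(\bm{d}),\bm{d}}(t)$ and define $\tilde{\bm{\alpha}}_{\bm{d}}\in\Gamma$, a strategy from $\bm{y}_{\bm{d}}$, by $\tilde{\bm{\alpha}}_{\bm{d}}(\bm{e})(\cdot):=\bm{\alpha}(\bm{d}|_{[0,t]}\diamond\bm{e})(\cdot+t)$, where $\diamond$ is concatenation at time $t$; causality of $\bm{\alpha}$ makes $\tilde{\bm{\alpha}}_{\bm{d}}$ non-anticipative. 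Combining the cocycle property with the splitting, $e^{-\gamma t}J(\bm{y}_{\bm{d}},\tilde{\bm{\alpha}}_{\bm{d}}(\bm{e}),\bm{e})\le J(\bm{x},\bm{\alpha}(\bm{d}|_{[0,t]}\diamond\bm{e}),\bm{d}|_{[0,t]}\diamond\bm{e})\le V^-(\bm{x})+\epsilon$ for every $\bm{e}\in\mathcal{D}$, hence $e^{-\gamma t}V^-(\bm{y}_{\bm{d}})\le V^-(\bm{x})+\epsilon$; since also $\sup_{\tau\in[0,t]}e^{-\gamma\tau}h(\bm{\phi}_{\bm{x}}^{\bm{\alpha}(\bm{d}),\bm{d}}(\tau))\le J(\bm{x},\bm{\alpha}(\bm{d}),\bm{d})\le V^-(\bm{x})+\epsilon$, the quantity inside the outer maximum defining $W$ is $\le V^-(\bm{x})+\epsilon$ for this $\bm{\alpha}$ and all $\bm{d}$. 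Taking $\sup_{\bm{d}}$ and then $\inf$ over strategies gives $W(\bm{x})\le V^-(\bm{x})+\epsilon$, and $\epsilon\downarrow0$ finishes this half.

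For the reverse inequality $V^-(\bm{x})\le W(\bm{x})$, fix $\epsilon>0$ and choose $\bm{\alpha}_1\in\Gamma$ that is $\epsilon$-optimal for $W(\bm{x})$. By continuity of $\bm{d}\mapsto\bm{\phi}_{\bm{x}}^{\bm{\alpha}_1(\bm{d}),\bm{d}}(t)$ and compactness of $\mathcal{D}$, the set $K$ of time-$t$ states $\bm{\phi}_{\bm{x}}^{\bm{\alpha}_1(\bm{d}),\bm{d}}(t)$, $\bm{d}\in\mathcal{D}$, is compact. Fix $T$ with $2Me^{-\gamma T}\le\epsilon$; then on the finite window $[0,T]$ a Gronwall estimate $\|\bm{\phi}_{\bm{z}}^{\bm{e}_1,\bm{e}_2}(s)-\bm{\phi}_{\bm{w}}^{\bm{e}_1,\bm{e}_2}(s)\|\le e^{L_{\bm f}T}\|\bm{z}-\bm{w}\|$ combined with $2Me^{-\gamma T}\le\epsilon$ yields $|J(\bm{z},\cdot,\cdot)-J(\bm{w},\cdot,\cdot)|\le\max\{L_he^{L_{\bm f}T}\|\bm{z}-\bm{w}\|,\epsilon\}$ uniformly in the control arguments --- this is the same estimate underlying the proof of Lemma \ref{cont}. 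Using this and the continuity of $V^-$ from Lemma \ref{cont}, cover $K$ by finitely many balls $B(\bm{y}_i,\rho)$, $i=1,\dots,N$, with $\rho$ so small that $|V^-(\bm{z})-V^-(\bm{y}_i)|\le\epsilon$ and $|J(\bm{z},\cdot,\cdot)-J(\bm{y}_i,\cdot,\cdot)|\le\epsilon$ for $\bm{z}\in B(\bm{y}_i,\rho)$; pick $\epsilon$-optimal strategies $\bm{\alpha}_i^{*}\in\Gamma$ for $V^-(\bm{y}_i)$ and let $i(\bm{z})$ be the least index with $\bm{z}\in B(\bm{y}_i,\rho)$. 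Define $\bm{\alpha}\in\Gamma$ by $\bm{\alpha}(\bm{d})=\bm{\alpha}_1(\bm{d})$ on $[0,t]$ and $\bm{\alpha}(\bm{d})(s)=\bm{\alpha}_{i(\bm{z}_{\bm{d}})}^{*}(\bm{d}(\cdot+t))(s-t)$ for $s\ge t$, where $\bm{z}_{\bm{d}}=\bm{\phi}_{\bm{x}}^{\bm{\alpha}_1(\bm{d}),\bm{d}}(t)$; since $i(\bm{z}_{\bm{d}})$ depends only on $\bm{d}|_{[0,t]}$ and each $\bm{\alpha}_i^{*}$ is causal, $\bm{\alpha}$ is non-anticipative and $\bm{d}\mapsto\bm{\alpha}(\bm{d})$ takes values in $\mathcal{U}$. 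Then the splitting of $J$, the choice of $\bm{\alpha}_i^{*}$, and the two $\epsilon$-estimates give, for every $\bm{d}$, $J(\bm{x},\bm{\alpha}(\bm{d}),\bm{d})\le\max\{\sup_{\tau\in[0,t]}e^{-\gamma\tau}h(\bm{\phi}_{\bm{x}}^{\bm{\alpha}_1(\bm{d}),\bm{d}}(\tau)),\,e^{-\gamma t}V^-(\bm{z}_{\bm{d}})\}+O(\epsilon)\le W(\bm{x})+O(\epsilon)$, whence $V^-(\bm{x})\le\sup_{\bm{d}}J(\bm{x},\bm{\alpha}(\bm{d}),\bm{d})\le W(\bm{x})+O(\epsilon)$, and $\epsilon\downarrow0$ completes the proof of \eqref{dy1}.

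The main obstacle is the construction in the previous paragraph: splicing the first-stage strategy $\bm{\alpha}_1$ on $[0,t]$ with near-optimal continuations issued from the time-$t$ states, so as to obtain a genuine non-anticipative strategy on $[0,\infty)$ while losing only $O(\epsilon)$. Keeping the splice non-anticipative is handled by observing that the continuation index depends only on $\bm{d}|_{[0,t]}$; controlling the loss despite the continuum of reachable time-$t$ states is where the continuity of $V^-$ (Lemma \ref{cont}), the compactness of the reachable set $K$, and the reduction to a finite horizon afforded by the discount factor $e^{-\gamma t}$ all enter, turning an uncountable selection into a finite one. The reverse inequality and the cocycle/splitting identities are routine, and the argument for \eqref{dy2} is identical after interchanging the roles of the two players.
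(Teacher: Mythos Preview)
Your argument is correct in outline and follows the same two-inequality template as the paper: pick an $\epsilon$-optimal strategy for one side, splice with $\epsilon$-optimal continuations, and use the cocycle/splitting identity for $J$. The direction $W\le V^-$ is handled essentially identically (your tail-strategy $\tilde{\bm{\alpha}}_{\bm d}$ is just a cleaner bookkeeping of what the paper does with its $\bm{d}_1,\bm{d}_2$).

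The genuine difference is in the direction $V^-\le W$. The paper simply picks, for \emph{each} reachable point $\bm{y}=\bm{\phi}_{\bm{x}}^{\bm{\alpha}_1(\bm{d}),\bm{d}}(t)$, an $\epsilon$-optimal $\bm{\alpha}_2^{\bm{y}}\in\Gamma$ and concatenates; since membership in $\Gamma$ demands only non-anticipativity (no regularity of $\bm{\alpha}$ as a map on $\mathcal{D}$), this axiom-of-choice selection already yields a legitimate strategy and the inequality follows in two lines. Your finite-cover construction is more constructive and makes the splice explicit, but it buys that at the cost of importing Lemma~\ref{cont} (Lipschitz continuity of $V^-$), a Gronwall/finite-horizon estimate for $J$, and a compactness argument---none of which the paper's proof needs. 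So both routes work; yours is more careful about selection, the paper's is shorter and logically independent of Lemma~\ref{cont}.

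One small repair: your claim that $K=\{\bm{\phi}_{\bm{x}}^{\bm{\alpha}_1(\bm{d}),\bm{d}}(t):\bm{d}\in\mathcal{D}\}$ is compact ``by continuity of $\bm{d}\mapsto\bm{\phi}_{\bm{x}}^{\bm{\alpha}_1(\bm{d}),\bm{d}}(t)$'' is not justified as stated---non-anticipative strategies need not be continuous in $\bm d$, so this map need not be continuous. However $K$ is bounded by a standard Gronwall estimate on $[0,t]$, so you can cover the compact set $\overline{K}$ instead and the rest of your argument goes through unchanged.
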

\begin{proof}
Let
\begin{equation*} 
\begin{split}
W(\bm{x},t):=\inf_{\bm{\alpha}(\cdot)\in \Gamma}\sup_{\bm{d}(\cdot)\in \mathcal{D}}\max\{e^{-\gamma t}V^{-}(\bm{\phi}_{\bm{x}}^{\bm{\alpha}(\bm{d}),\bm{d}}(t)), \sup_{\tau\in [0,t]} e^{-\gamma \tau}h(\bm{\phi}_{\bm{x}}^{\bm{\alpha}(\bm{d}),\bm{d}}(\tau))\}.
\end{split}
\end{equation*} 

We will show that for every $\epsilon>0$, $V^-(\bm{x})\leq W(\bm{x},t)+2\epsilon$ and $V^-(\bm{x})\geq W(\bm{x},t)-3\epsilon$. Then since $\epsilon>0$ is arbitrary, $V^-(\bm{x})=W(\bm{x},t)$.

\textbf{1.} $V^-(\bm{x})\leq W(\bm{x},t)+2\epsilon$. Fix $\epsilon>0$ and choose $\bm{\alpha}_1(\cdot)\in\Gamma$ such that
\begin{equation*}
\begin{split}
W(\bm{x},t)\geq \sup_{\bm{d}_1(\cdot)\in\mathcal{D}}\max\{e^{-\gamma t}V^{-}(\bm{\phi}_{\bm{x}}^{\bm{\alpha}_1(\bm{d}_1),\bm{d}_1}(t)), \sup_{\tau\in [0,t]} e^{-\gamma \tau}h(\bm{\phi}_{\bm{x}}^{\bm{\alpha}_1(\bm{d}_1),\bm{d}_1}(\tau))\}-\epsilon.
\end{split}
\end{equation*} 
Similarly, choose $\bm{\alpha}_2(\cdot)\in \Gamma$ such that
$$V^{-}(\bm{y})\geq \sup_{\bm{d}_2(\cdot)\in \mathcal{D}}\sup_{\tau\in [t,\infty)}e^{-\gamma (\tau-t)} h(\bm{\phi}_{\bm{y}}^{\bm{\alpha}_2(\bm{d}_2),\bm{d}_2}(\tau-t))-\epsilon,$$
where $\bm{y}=\bm{\phi}_{\bm{x}}^{\bm{\alpha}_1(\bm{d}_1),\bm{d}_1}(t)$.

Let 
\begin{equation*}
\bm{d}(\tau)=
\begin{cases}
&\bm{d}_1(\tau) \text{~~~~~if } \tau \in [0,t)\\
&\bm{d}_2(\tau-t)\text{~if } \tau \in [t,\infty)
\end{cases}
\end{equation*}
and
\begin{equation}
\bm{\alpha}(\bm{d})(\tau)=
\begin{cases}
&\bm{\alpha}_1(\bm{d})(\tau) \text{~~~~~if } \tau \in [0,t)\\
&\bm{\alpha}_2(\bm{d})(\tau-t) \text{~if } \tau \in [t,\infty)
\end{cases}.
\end{equation}
It is easy to see that $\bm{\alpha}(\cdot): \mathcal{D}\mapsto \mathcal{U}$ is non-anticipative. By uniqueness, $\bm{\phi}_{\bm{x}}^{\bm{\alpha}(\bm{d}),\bm{d}}(\tau)=\bm{\phi}_{\bm{x}}^{\bm{\alpha}_1(\bm{d}_1),\bm{d}_1}(\tau)$ if $\tau\in [0,t)$, and $\bm{\phi}_{\bm{x}}^{\bm{\alpha}(\bm{d}),\bm{d}}(\tau)=\bm{\phi}_{\bm{y}}^{\bm{\alpha}_2(\bm{d}_2),\bm{d}_2}(\tau-t)$ if $\tau\in [t,\infty)$.

Hence,
\begin{equation}
\begin{split}
&W(\bm{x},t)\\
&\geq \sup_{\bm{d}_1(\cdot)\in\mathcal{D}}\sup_{\bm{d}_2(\cdot)\in \mathcal{D}}\max\{\sup_{\tau\in[t,\infty)}e^{-\gamma \tau} h(\bm{\phi}_{\bm{y}}^{\bm{\alpha}_2(\bm{d}_2),\bm{d}_2}(\tau-t)), \sup_{\tau\in [0,t]} e^{-\gamma \tau}h(\bm{\phi}_{\bm{x}}^{\bm{\alpha}_1(\bm{d}_1),\bm{d}_1}(\tau))\}-2\epsilon\\
&\geq \sup_{\bm{d}(\cdot)\in \mathcal{D}}\sup_{\tau\in [0,\infty)} e^{-\gamma \tau}h(\bm{\phi}_{\bm{x}}^{\bm{\alpha}(\bm{d}),\bm{d}}(\tau))-2\epsilon\\
&\geq V^-(\bm{x})-2\epsilon.
\end{split}
\end{equation}
Therefore, $V^-(\bm{x})\leq W(\bm{x},t)+2\epsilon$.

\textbf{2.} $V^-(\bm{x})\geq W(\bm{x},t)-3\epsilon$. Fix $\epsilon>0$ and choose $\bm{\alpha}(\cdot)\in \Gamma$ such that
\begin{equation}
\label{16}
V^-(\bm{x})\geq \sup_{\bm{d}(\cdot)\in \mathcal{D}}\sup_{t\in [0,\infty)}e^{-\gamma t}h(\bm{\phi}_{\bm{x}}^{\bm{\alpha}(\bm{d}),\bm{d}}(t))-\epsilon.
\end{equation}
By the definition of $W(\bm{x},t)$, we have
\begin{equation*}
\begin{split}
W(\bm{x},t)\leq \sup_{\bm{d}(\cdot)\in \mathcal{D}}\max\{\max_{\tau\in [0,t]}e^{-\gamma \tau}h(\bm{\phi}_{\bm{x}}^{\bm{\alpha}(\bm{d}),\bm{d}}(\tau)), e^{-\gamma t}V^-(\bm{\phi}_{\bm{x}}^{\bm{\alpha}(\bm{d}),\bm{d}}(t))\}.
\end{split}
\end{equation*}

Hence there exists $\bm{d}_1(\cdot)\in\mathcal{D}$ such that
\begin{equation}
\label{17}
W(\bm{x},t)\leq \max\{\max_{\tau\in [0,t]}e^{-\gamma \tau}h(\bm{\phi}_{\bm{x}}^{\bm{\alpha}(\bm{d}_1),\bm{d}_1}(\tau)),e^{-\gamma t}V^-(\bm{y})\}+\epsilon.
\end{equation}
where $\bm{y}=\bm{\phi}_{\bm{x}}^{\bm{\alpha}(\bm{d}_1),\bm{d}_1}(t).$
Moreover, we have
\begin{equation}
\begin{split}
V^{-}(\bm{y})\leq \sup_{\bm{d}(\cdot)\in \mathcal{D}}\sup_{\tau\in [t,\infty)} e^{-\gamma (\tau-t)} h(\bm{\phi}_{\bm{y}}^{\bm{\alpha}(\bm{d}),\bm{d}}(\tau-t)), \forall \tau\in [t,\infty).
\end{split}
\end{equation}
so there exists $\bm{d}_2(\cdot)\in \mathcal{D}$ such that
\begin{equation}
\label{18}
\begin{split}
V^{-}(\bm{y})\leq \sup_{\tau\in [t,\infty)} e^{-\gamma (\tau-t)} h(\bm{\phi}_{\bm{y}}^{\bm{\alpha}(\bm{d}_2),\bm{d}_2}(\tau-t))+\epsilon.
\end{split}
\end{equation}
We define
\begin{equation}
\bm{d}(\tau)=
\begin{cases}
&\bm{d}_1(\tau) \text{ if }\tau \in [0,t)\\
&\bm{d}_2(\tau-t) \text{ if }\tau \in [t,\infty)
\end{cases}.
\end{equation}
Therefore,  combining \eqref{17} and \eqref{18}, we have
\[W(\bm{x},t)\leq \sup_{\tau\in [0,\infty)}e^{-\gamma \tau}h(\bm{\phi}_{\bm{x}}^{\bm{\alpha}(\bm{d}),\bm{d}}(\tau))+2\epsilon,\]
which together with \eqref{16} implies $V^-(\bm{x})\geq W(\bm{x},t)-3\epsilon$.

The above procedure can be applied to prove that $V^{+}$ satisfies the dynamic programming principle \eqref{dy2}.
\end{proof}

Based on the established dynamic programming principle in Lemma \ref{dyn}, we construct Hamilton-Jacobi partial differential equations associated with $V^-$ and $V^+$ respectively,
 \begin{equation}
 \label{HJB1}
 \min\{\gamma V(\bm{x})-H^-(\bm{x},\frac{\partial V(\bm{x})}{\partial \bm{x}}),V(\bm{x})-h(\bm{x})\}=0~\text{and}
 \end{equation}
 \begin{equation}
 \label{HJB2}
 \min\{\gamma V(\bm{x})-H^+(\bm{x},\frac{\partial V(\bm{x})}{\partial \bm{x}}),V(\bm{x})-h(\bm{x})\}=0,
 \end{equation}
where
\begin{align}
&H^-(\bm{x},\bm{p})=\sup_{\bm{d}\in D}\inf_{\bm{u}\in U} \bm{p}\cdot \bm{f}(\bm{x},\bm{u},\bm{d})\text{~and}\label{upper_h}\\
&H^{+}(\bm{x},\bm{p})=\inf_{\bm{u}\in U}\sup_{\bm{d}\in D} \bm{p}\cdot \bm{f}(\bm{x},\bm{u},\bm{d}) \label{lower_h}
\end{align}
are the sup-inf and inf-sup Hamiltonians respectively.
These two equations are the core focus of this paper. We in the sequel will show that $V^-$ and $V^+$ are respectively the unique Lipschitz continuous and bounded viscosity solution to \eqref{HJB1} and \eqref{HJB2}. Before this, we first recall the concept of viscosity solutions to \eqref{HJB1} (or \eqref{HJB2}).

\begin{definition} \cite{Bardi1997}
\label{viscosity}
A locally bounded continuous function $V(\bm{x})$ on $\mathbb{R}^n$ is a viscosity solution of \eqref{HJB1} (\eqref{HJB2}), if 1) for any test function $v\in C^{\infty}(\mathbb{R}^n)$ such that $V-v$ attains a local minimum at $\bm{x}_0\in \mathbb{R}^n$,
\begin{equation}
\label{super}
\begin{split}
\min\big\{\gamma V(\bm{x}_0)-H^-(\bm{x}_0,\frac{\partial v(\bm{x})}{\partial \bm{x}}\mid_{\bm{x}=\bm{x}_0}),
V(\bm{x}_0)-h(\bm{x}_0)\big\}\geq 0\\
(\min\big\{\gamma V(\bm{x}_0)-H^+(\bm{x}_0,\frac{\partial v(\bm{x})}{\partial \bm{x}}\mid_{\bm{x}=\bm{x}_0}),
V(\bm{x}_0)-h(\bm{x}_0)\big\}\geq 0)
\end{split}
\end{equation}
holds (i.e., $V$ is a viscosity supersolution); 2) for any test function $v\in C^{\infty}(\mathbb{R}^n)$ such that $V-v$ attains a local maximum at $\bm{x}_0 \in \mathbb{R}^n$,
\begin{equation}
\label{sub}
\begin{split}
\min\big\{\gamma V(\bm{x}_0)-H^-(\bm{x}_0,\frac{\partial v(\bm{x})}{\partial \bm{x}}\mid_{\bm{x}=\bm{x}_0}), V(\bm{x}_0)-h(\bm{x}_0)\big\}\leq 0\\
(\min\big\{\gamma V(\bm{x}_0)-H^+(\bm{x}_0,\frac{\partial v(\bm{x})}{\partial \bm{x}}\mid_{\bm{x}=\bm{x}_0}), V(\bm{x}_0)-h(\bm{x}_0)\big\}\leq 0)
\end{split}
\end{equation}
holds (i.e., $V$ is a viscosity subsolution).
\end{definition}

In order to prove that $V^-(\bm{x})$ and $V^+(\bm{x})$ are respectively the viscosity solution to \eqref{HJB1} and \eqref{HJB2}, we need an intermediate lemma below.
\begin{lemma}
\label{smooth}
Let $v\in C^{\infty}(\mathbb{R}^n)$.
\begin{enumerate}
\item If $\gamma v(\bm{x}_0)-H^{-}(\bm{x}_0,\frac{\partial v(\bm{x})}{\partial \bm{x}}\mid_{\bm{x}=\bm{x}_0})\leq -\theta<0$, then, for sufficiently small $\delta>0$, there exists $\bm{d}(\cdot)\in \mathcal{D}$ such that for all $\bm{\alpha}(\cdot)\in \Gamma$ and all $s \in [0,\delta]$,
\begin{equation*}
\begin{split}
\gamma v(\bm{\phi}_{\bm{x}_0}^{\bm{\alpha}(\bm{d}),\bm{d}}(s))-\frac{\partial v(\bm{x})}{\partial \bm{x}}\mid_{\bm{x}=\bm{\phi}_{\bm{x}_0}^{\bm{\alpha}(\bm{d}),\bm{d}}(s)}\cdot  \bm{f}(\bm{\phi}_{\bm{x}_0}^{\bm{\alpha}(\bm{d}),\bm{d}}(s),\bm{\alpha}(\bm{d})(s),\bm{d}(s))\leq -\frac{\theta}{2}.
\end{split}
\end{equation*}

\item If $\gamma v(\bm{x}_0)-H^{-}(\bm{x}_0,\frac{\partial v(\bm{x})}{\partial \bm{x}}\mid_{\bm{x}=\bm{x}_0})\geq \theta>0$, then, for sufficiently small $\delta>0$, there exists $\bm{\alpha}(\cdot)\in \Gamma$ such that for all $\bm{d}(\cdot)\in \mathcal{D}$ and all $s \in [0,\delta]$,
\begin{equation*}
\begin{split}
\gamma v(\bm{\phi}_{\bm{x}_0}^{\bm{\alpha}(\bm{d}),\bm{d}}(s))-\frac{\partial v(\bm{x})}{\partial \bm{x}}\mid_{\bm{x}=\bm{\phi}_{\bm{x}_0}^{\bm{\alpha}(\bm{d}),\bm{d}}(s)}\cdot \bm{f}(\bm{\phi}_{\bm{x}_0}^{\bm{\alpha}(\bm{d}),\bm{d}}(s),\bm{\alpha}(\bm{d})(s),\bm{d}(s))\geq \frac{\theta}{2}.
\end{split}
\end{equation*}

\item If $\gamma v(\bm{x}_0)-H^{+}(\bm{x}_0,\frac{\partial v(\bm{x})}{\partial \bm{x}}\mid_{\bm{x}=\bm{x}_0})\geq \theta>0$, then, for sufficiently small $\delta>0$, there exists $\bm{u}(\cdot)\in \mathcal{U}$
such that for all $\bm{\beta}(\cdot)\in \Delta$ and all $s \in [0,\delta]$,
\begin{equation*}
\begin{split}
\gamma v(\bm{\phi}_{\bm{x}_0}^{\bm{u},\bm{\beta}(\bm{u})}(s))- \frac{\partial v(\bm{x})}{\partial \bm{x}}\mid_{\bm{x}=\bm{\phi}_{\bm{x}_0}^{\bm{u},\bm{\beta}(\bm{u})}}\cdot \bm{f}(\bm{\phi}_{\bm{x}_0}^{\bm{u},\bm{\beta}(\bm{u})}(s),\bm{u}(s),\bm{\beta}(\bm{u})(s))\geq \frac{\theta}{2}.
\end{split}
\end{equation*}
\item If $\gamma v(\bm{x}_0)-H^{+}(\bm{x}_0,\frac{\partial v(\bm{x})}{\partial \bm{x}}\mid_{\bm{x}=\bm{x}_0})\leq- \theta<0$, then, for sufficiently small $\delta>0$, there exists $\bm{\beta}(\cdot)\in \Delta$ such that for all $\bm{u}(\cdot) \in \mathcal{U}$ and all $s \in [0,\delta]$,
\begin{equation*}
\begin{split}
\gamma v(\bm{\phi}_{\bm{x}_0}^{\bm{u},\bm{\beta}(\bm{u})}(s))-\frac{\partial v(\bm{x})}{\partial \bm{x}}\mid_{\bm{x}=\bm{\phi}_{\bm{x}_0}^{\bm{u},\bm{\beta}(\bm{u})}}\cdot \bm{f}(\bm{\phi}_{\bm{x}_0}^{\bm{u},\bm{\beta}(\bm{u})}(s),\bm{u}(s),\bm{\beta}(\bm{u})(s))\leq -\frac{\theta}{2}.
\end{split}
\end{equation*}
\end{enumerate}
\end{lemma}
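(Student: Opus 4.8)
The plan is to prove all four statements in one stroke, since they are mirror images of one another: parts~1 and~2 concern $H^-=\sup_{\bm{d}}\inf_{\bm{u}}$, parts~3 and~4 concern $H^+=\inf_{\bm{u}}\sup_{\bm{d}}$, and in each Hamiltonian one player is the ``outer'' (leading) player. I would write part~1 out in full and then indicate the symmetric substitutions. Abbreviate $\bm{p}_0:=\frac{\partial v(\bm{x})}{\partial\bm{x}}\big|_{\bm{x}=\bm{x}_0}$. For part~1 the hypothesis reads $H^-(\bm{x}_0,\bm{p}_0)=\sup_{\bm{d}\in D}\inf_{\bm{u}\in U}\bm{p}_0\cdot\bm{f}(\bm{x}_0,\bm{u},\bm{d})\geq\gamma v(\bm{x}_0)+\theta$. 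Since $(\bm{u},\bm{d})\mapsto\bm{p}_0\cdot\bm{f}(\bm{x}_0,\bm{u},\bm{d})$ is continuous and $U,D$ are compact, $\bm{d}\mapsto\min_{\bm{u}\in U}\bm{p}_0\cdot\bm{f}(\bm{x}_0,\bm{u},\bm{d})$ is continuous and attains its maximum over $D$; I pick a maximizer $\bm{d}^*$ and use the \emph{constant} disturbance $\bm{d}(\cdot)\equiv\bm{d}^*$. Part~3 is obtained by interchanging the two min/max operations: there I pick a constant control $\bm{u}(\cdot)\equiv\bm{u}^*$ with $\sup_{\bm{d}\in D}\bm{p}_0\cdot\bm{f}(\bm{x}_0,\bm{u}^*,\bm{d})\leq\gamma v(\bm{x}_0)-\theta$. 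For parts~2 and~4 the leading player needs a genuine non-anticipative strategy: in part~2, for each fixed $\bm{d}\in D$ the continuous function $\bm{u}\mapsto\bm{p}_0\cdot\bm{f}(\bm{x}_0,\bm{u},\bm{d})$ attains its minimum on the compact set $U$, and a standard measurable-selection theorem furnishes a Borel map $\psi:D\to U$ with $\bm{p}_0\cdot\bm{f}(\bm{x}_0,\psi(\bm{d}),\bm{d})=\min_{\bm{u}\in U}\bm{p}_0\cdot\bm{f}(\bm{x}_0,\bm{u},\bm{d})$ for all $\bm{d}$; then $\bm{\alpha}(\bm{d})(s):=\psi(\bm{d}(s))$ is measurable and, depending only on $\bm{d}(s)$, belongs to $\Gamma$. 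Part~4 is the same construction with $\bm{u}$ and $\bm{d}$ interchanged, producing a $\bm{\beta}\in\Delta$ from a maximizing selection.

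The point of these choices is that the desired inequality already holds \emph{exactly} at $\bm{x}_0$, uniformly over the opponent's action. In part~1, for any $\bm{\alpha}(\cdot)\in\Gamma$ and any $s$ the action $\bm{a}:=\bm{\alpha}(\bm{d})(s)\in U$ satisfies $\bm{p}_0\cdot\bm{f}(\bm{x}_0,\bm{a},\bm{d}^*)\geq\inf_{\bm{u}}\bm{p}_0\cdot\bm{f}(\bm{x}_0,\bm{u},\bm{d}^*)\geq\gamma v(\bm{x}_0)+\theta$, whence $\gamma v(\bm{x}_0)-\bm{p}_0\cdot\bm{f}(\bm{x}_0,\bm{a},\bm{d}^*)\leq-\theta$. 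In part~2, by the choice of $\psi$, for any $\bm{d}(\cdot)\in\mathcal{D}$ and any $s$, $\gamma v(\bm{x}_0)-\bm{p}_0\cdot\bm{f}(\bm{x}_0,\bm{\alpha}(\bm{d})(s),\bm{d}(s))=\gamma v(\bm{x}_0)-\min_{\bm{u}}\bm{p}_0\cdot\bm{f}(\bm{x}_0,\bm{u},\bm{d}(s))\geq\gamma v(\bm{x}_0)-H^-(\bm{x}_0,\bm{p}_0)\geq\theta$. Parts~3 and~4 give, in exactly the same way, $\leq-\theta$ and $\geq\theta$ respectively.

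It then remains to transfer these strict bounds to a short time interval by continuity. Fix $\bar B:=\overline{B(\bm{x}_0,1)}$. Since $\bm{f}$ is continuous, $\|\bm{f}\|\leq M_0$ on $\bar B\times U\times D$, so regardless of the controls any trajectory starting at $\bm{x}_0$ obeys $\|\bm{\phi}_{\bm{x}_0}^{\cdot,\cdot}(s)-\bm{x}_0\|\leq M_0 s$ while it stays in $\bar B$, hence it remains in $\bar B$ throughout $[0,\delta]$ as soon as $\delta\leq 1/M_0$ --- uniformly in both players' choices. On $\bar B$, $v\in C^\infty$ so $\nabla v$ is bounded by some $M_1$ and uniformly continuous, $v$ is Lipschitz with some constant $L_v$, and $\bm{f}(\cdot,\bm{a},\bm{b})$ is uniformly continuous uniformly in $(\bm{a},\bm{b})\in U\times D$; let $\omega$ denote a common modulus of continuity. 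Then for every $\bm{y}\in\bar B$ and $(\bm{a},\bm{b})\in U\times D$,
\begin{equation*}
\big|\,\big(\gamma v(\bm{y})-\nabla v(\bm{y})\cdot\bm{f}(\bm{y},\bm{a},\bm{b})\big)-\big(\gamma v(\bm{x}_0)-\bm{p}_0\cdot\bm{f}(\bm{x}_0,\bm{a},\bm{b})\big)\,\big|\leq\gamma L_v\|\bm{y}-\bm{x}_0\|+(M_0+M_1)\,\omega\big(\|\bm{y}-\bm{x}_0\|\big),
\end{equation*}
which tends to $0$ as $\bm{y}\to\bm{x}_0$, uniformly in $(\bm{a},\bm{b})$. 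Choosing $\delta\in(0,1/M_0]$ so small that the right-hand side is at most $\theta/2$ whenever $\|\bm{y}-\bm{x}_0\|\leq M_0\delta$, and applying the estimate along $\bm{y}=\bm{\phi}_{\bm{x}_0}^{\cdot,\cdot}(s)$ with $(\bm{a},\bm{b})$ equal to the players' actions at time $s$, the exact bounds at $\bm{x}_0$ weaken to the claimed $\leq-\theta/2$ (parts~1,~4) or $\geq\theta/2$ (parts~2,~3) on $[0,\delta]$.

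I expect the only genuinely delicate points to be: (i)~the measurable-selection construction of the non-anticipative strategies in parts~2 and~4 --- one must check it is both measurable and non-anticipative, which is immediate once the strategy is memoryless in the opponent's instantaneous value; and (ii)~arranging that a \emph{single} $\delta$ works simultaneously for every admissible strategy of the opposing player, which is precisely what the uniform a priori trajectory bound and the uniform modulus of continuity on $\bar B$ deliver. The remaining algebra is routine.
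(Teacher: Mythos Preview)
Your argument is correct and follows essentially the same route as the paper: a constant control for the ``outer'' player in parts~1 and~3, an instantaneous (memoryless) selection producing a non-anticipative strategy in parts~2 and~4, and a uniform-in-controls trajectory bound together with continuity on a fixed compact ball to carry the strict inequality at $\bm{x}_0$ over to $[0,\delta]$. The only notable difference is that for the selection in parts~2 and~4 the paper builds the map $D\to U$ (resp.\ $U\to D$) explicitly as a piecewise-constant function via a finite cover of the compact set, avoiding any appeal to a measurable-selection theorem; your invocation of such a theorem is equally valid but slightly less elementary.
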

\begin{proof}
The proofs of statements  1 and 2 are given. The statements  3 and 4 can be justified similarly.

1. Since $\gamma v(\bm{x}_0)-H^{-}(\bm{x}_0,\frac{\partial v(\bm{x})}{\partial \bm{x}}\mid_{\bm{x}=\bm{x}_0})\leq -\theta<0$, there exists $\bm{d}_0\in D$ such that $$\gamma v(\bm{x}_0)- \frac{\partial v(\bm{x})}{\partial \bm{x}}\mid_{\bm{x}=\bm{x}_0}\cdot \bm{f}(\bm{x}_0,\bm{u},\bm{d}_0)\leq -\frac{3}{4}\theta<0, \forall \bm{u}\in U.$$ Also, since $v\in C^{\infty}$, $\bm{f}(\bm{x},\bm{u},\bm{d})$ is continuous over $(\bm{x}, \bm{u},\bm{d})$, there exists $\delta_{\bm{u}}$ for $\bm{u}\in U$ such that for $\bm{x} \text{ satisfying }\|\bm{x}-\bm{x}_0\|\leq \delta_{\bm{u}}$,
\[
 \gamma v(\bm{x})-\frac{\partial v(\bm{x})}{\partial \bm{x}}\cdot \bm{f}(\bm{x},\bm{u},\bm{d}_0)\leq -\frac{3}{5}\theta<0.
\] 
Since $U$ is a compact set in $\mathbb{R}^m$, there exist finitely many distinct points $\bm{u}_{1},\ldots, \bm{u}_{l}\in U$ with positive values $\delta_{\bm{u}_1},\ldots,\delta_{\bm{u}_l}$ such that $$U\subset \cup_{i=1}^l\{\bm{u}\mid \|\bm{u}-\bm{u}_{i}\|\leq \delta_{\bm{u}_i}\}$$ and 
$$\gamma v(\bm{x})-\frac{\partial v(\bm{x})}{\partial \bm{x}}\cdot \bm{f}(\bm{x},\bm{u},\bm{d}_0)\leq -\frac{1}{2}\theta<0$$ for $\bm{x}$ satisfying $\|\bm{x}-\bm{x}_0\|\leq \delta_i$ and $\bm{u}$ satisfying $\|\bm{u}-\bm{u}_{i}\|\leq \delta_{\bm{u}_i}$, where $i=1,\ldots,l$. Therefore, 
$$\gamma v(\bm{x})-\frac{\partial v(\bm{x})}{\partial \bm{x}}\cdot \bm{f}(\bm{x},\bm{u},\bm{d}_0)\leq -\frac{1}{2}\theta<0, \forall \bm{u}\in U$$ for $\bm{x} \text{ satisfying }\|\bm{x}-\bm{x}_0\|\leq \delta'=\min_{i=1,\ldots,l}\delta_{\bm{u}_i}$.

Let $\Omega$ be a compact set in $\mathbb{R}^n$ which covers all states traversed by trajectories starting from $\bm{x}_0$ within a finite time interval $[0,\delta^{''}]$, and $M$ be the upper bound of $\bm{f}(\bm{x},\bm{u},\bm{d})$ over $\Omega\times U\times D$. We have 
\begin{equation*}
\|\bm{\phi}_{\bm{x}_0}^{\bm{u},\bm{d}}(t)-\bm{x}_0\|=\int_{\tau=0}^{t}\|\bm{f}(\bm{x}(\tau),\bm{u}(\tau),\bm{d}(\tau))\|d\tau\leq M t, \forall \bm{u}(\cdot)\in \mathcal{U}, \forall \bm{d}(\cdot)\in \mathcal{D}.
\end{equation*}
Therefore, there exists $\delta>0$ such that 
\begin{equation}
\label{delta}
\|\bm{\phi}_{\bm{x}_0}^{\bm{u},\bm{d}}(t)-\bm{x}_0\|\leq \delta', \forall t\in [0,\delta], \forall \bm{u}(\cdot)\in \mathcal{U}, \forall \bm{d}(\cdot)\in \mathcal{D}.
\end{equation}

We choose a measurable function $\bm{d}':[0,\infty)\mapsto D$ with $\bm{d}'(s)=\bm{d}_0$ for $s\in [0,\infty)$. Obviously, $\bm{d}'(\cdot)\in \mathcal{D}$. Therefore, we have
\[\gamma v(\bm{\phi}_{\bm{x}_0}^{\bm{u},\bm{d}'}(s))- \frac{\partial v(\bm{x})}{\partial \bm{x}}\mid_{\bm{x}=\bm{\phi}_{\bm{x}_0}^{\bm{u},\bm{d}'}(s)}\cdot\bm{f}(\bm{\phi}_{\bm{x}_0}^{\bm{u},\bm{d}'}(s),\bm{u}(s),\bm{d}'(s))\leq -\frac{\theta}{2}, \forall \bm{u}(\cdot)\in \mathcal{U},\forall s \in [0,\delta],\]
implying that for all $\bm{\alpha}(\cdot)\in \Gamma$ and all $s \in [0,\delta]$, 
\begin{equation*}
\begin{split}
\gamma v(\bm{\phi}_{\bm{x}_0}^{\bm{\alpha}(\bm{d}'),\bm{d}'}(s))- \frac{\partial v(\bm{x})}{\partial \bm{x}}\mid_{\bm{x}=\bm{\phi}_{\bm{x}_0}^{\bm{\alpha}(\bm{d}'),\bm{d}'}(s)}\cdot \bm{f}(\bm{\phi}_{\bm{x}_0}^{\bm{\alpha(\bm{d}')},\bm{d}'}(s),\bm{\alpha}(\bm{d}')(s),\bm{d}'(s))\leq -\frac{\theta}{2}.
\end{split}
\end{equation*}

2. Since $\gamma v(\bm{x}_0)-H^{-}(\bm{x}_0,\frac{\partial v(\bm{x})}{\partial \bm{x}}\mid_{\bm{x}=\bm{x}_0})\geq \theta>0$, there exists a corresponding $\bm{u}_{\bm{d}_0} \in U$ for every $\bm{d}_0\in D$ such that
$$\gamma v(\bm{x}_0)-\frac{\partial v(\bm{x})}{\partial \bm{x}}\mid_{\bm{x}=\bm{x}_0}\cdot \bm{f}(\bm{x}_0,\bm{u}_{\bm{d}_0},\bm{d}_0)\geq \frac{3}{4}\theta>0.$$ Since $v\in C^{\infty}$ and $\bm{f}(\bm{x},\bm{u},\bm{d})$ is continuous over $\bm{x}$, $\bm{u}$ and $\bm{d}$, there exists $\delta'>0$ such that
for $\bm{d}\in D$ satisfying $\|\bm{d}-\bm{d}_0\|\leq \delta'$ and $\bm{x}$ satisfying $\|\bm{x}-\bm{x}_0\|\leq \delta'$,
$$\gamma v(\bm{x})-\frac{\partial v(\bm{x})}{\partial \bm{x}}\cdot \bm{f}(\bm{x},\bm{u}_{\bm{d}_0},\bm{d})\geq \frac{3}{5}\theta>0.$$
Since $D$ is a compact set in $\mathbb{R}^l$, there exist finitely many distinct points $\bm{d}_1,\ldots, \bm{d}_l\in D$ with positive values $\delta_1,\ldots,\delta_l$ such that $D\subset \cup_{i=1}^l\{\bm{d}\mid \|\bm{d}-\bm{d}_i\|\leq \delta_i\}.$ Moreover, there exists $\bm{u}_{\bm{d}_i}\in U$ such that for $\bm{d}$ satisfying $\|\bm{d}-\bm{d}_i\|\leq \delta_i$ and $\bm{x}$ satisfying $\|\bm{x}-\bm{x}_0\|\leq \delta_i$,
$$\gamma v(\bm{x})-\frac{\partial v(\bm{x})}{\partial \bm{x}}\cdot \bm{f}(\bm{x},\bm{u}_{\bm{d}_i},\bm{d})\geq \frac{1}{2}\theta>0$$ holds, where $i=1,\ldots,l$.

Setting $\bm{\nu}: D\mapsto U$ such that $\bm{\nu}(\bm{d})=\bm{u}_{\bm{d}_i}$ if $\bm{d}\in \{\bm{d}\mid \|\bm{d}-\bm{d}_i\|\leq \delta_i\}\setminus \cup_{j=1}^{i-1} \{\bm{d}\mid \|\bm{d}-\bm{d}_j\|\leq \delta_j\}$ for $i=1,\ldots,l$, we have that for $\bm{x}$ satisfying $\|\bm{x}-\bm{x}_0\|\leq \delta'=\min_{i=1,\ldots,l} \delta_i$,
$$\gamma v(\bm{x})-\frac{\partial v(\bm{x})}{\partial \bm{x}}\cdot \bm{f}(\bm{x},\bm{\nu}(\bm{d}),\bm{d})\geq \frac{1}{2}\theta>0, \forall \bm{d}\in D.$$
Furthermore, like \eqref{delta}, we obtain that there exists $\delta>0$ such that
$$\bm{\phi}_{\bm{x}_0}^{\bm{\nu}(\bm{d}),\bm{d}}(s) \in \{\bm{x}\mid \|\bm{x}-\bm{x}_0\|\leq \delta'\}, \forall s\in [0,\delta], \forall \bm{d}(\cdot)\in \mathcal{D}.$$
Let $\bm{\alpha}(\cdot): \mathcal{D}\mapsto \mathcal{U}$ be $\bm{\alpha}(\bm{d})(s)=\bm{\nu}(\bm{d}(s))$ for $s\geq 0$. It is obvious that $\bm{\alpha}(\cdot)\in \Gamma$. Consequently, there exist $\delta>0$ and a strategy $\bm{\alpha}(\cdot)\in \Gamma$ such that for all $\bm{d}(\cdot)\in \mathcal{D}$ and all $s \in [0,\delta]$,
\begin{equation*}
\begin{split}
\gamma v(\bm{\phi}_{\bm{x}_0}^{\bm{\alpha}(\bm{d}),\bm{d}}(s))- \frac{\partial v(\bm{x})}{\partial \bm{x}}\mid_{\bm{x}=\bm{\phi}_{\bm{x}_0}^{\bm{\alpha}(\bm{d}),\bm{d}}(s)}\cdot \bm{f}(\bm{\phi}_{\bm{x}_0}^{\bm{\alpha}(\bm{d}),\bm{d}}(s),\bm{\alpha}(\bm{d})(s),\bm{d}(s))\geq \frac{\theta}{2}.
\end{split}
\end{equation*}
\end{proof}

We in the following reduce $V^-(\bm{x})$ and $V^+(\bm{x})$ to the viscosity solution to \eqref{HJB1} and \eqref{HJB2} respectively. 
 \begin{theorem}
 \label{viscosity1}
 $V^{-}$ and $V^{+}$ are respectively the viscosity solution to Hamilton-Jacobi equations  \eqref{HJB1} and \eqref{HJB2}.
 \end{theorem}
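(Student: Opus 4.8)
The plan is to verify directly that $V^-$ satisfies the two defining inequalities of a viscosity solution of \eqref{HJB1} — the supersolution inequality \eqref{super} whenever $V^--v$ has a local minimum, and the subsolution inequality \eqref{sub} whenever $V^--v$ has a local maximum — and to observe that $V^+$ and \eqref{HJB2} are handled by the mirror argument, using statements~3--4 of Lemma~\ref{smooth} and the dynamic programming principle \eqref{dy2} in place of statements~1--2 and \eqref{dy1}. Three ingredients are available. First, $V^-(\bm{x})\geq h(\bm{x})$ for every $\bm{x}$, since $J(\bm{x},\bm{u},\bm{d})\geq e^{-\gamma\cdot 0}h(\bm{\phi}_{\bm{x}}^{\bm{u},\bm{d}}(0))=h(\bm{x})$ and hence $V^-(\bm{x})=\inf_{\bm{\alpha}}\sup_{\bm{d}}J\geq h(\bm{x})$; this already disposes of the term $V^-(\bm{x}_0)-h(\bm{x}_0)$ in the supersolution inequality. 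Second, the dynamic programming principle \eqref{dy1}. Third, Lemma~\ref{smooth}, which converts the sign of $\gamma v(\bm{x}_0)-H^-(\bm{x}_0,\frac{\partial v}{\partial\bm{x}}\mid_{\bm{x}_0})$ into a sign condition, valid on a short interval $[0,\delta]$, for
\begin{equation*}
\frac{d}{ds}\Big(e^{-\gamma s}v(\bm{y}(s))\Big)=e^{-\gamma s}\Big(-\gamma v(\bm{y}(s))+\tfrac{\partial v}{\partial\bm{x}}\big|_{\bm{y}(s)}\cdot\bm{f}(\bm{y}(s),\cdot,\cdot)\Big),
\end{equation*}
where $\bm{y}(\cdot)$ is the trajectory from $\bm{x}_0$ driven by the control/strategy pair that Lemma~\ref{smooth} produces. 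Integrating over $[0,\delta]$ turns the sign condition into $e^{-\gamma\delta}v(\bm{y}(\delta))\geq v(\bm{x}_0)+\kappa$ or $e^{-\gamma\delta}v(\bm{y}(\delta))\leq v(\bm{x}_0)-\kappa$, with $\kappa:=\tfrac{\theta}{2\gamma}(1-e^{-\gamma\delta})>0$; the estimate $\|\bm{\phi}_{\bm{x}_0}^{\bm{u},\bm{d}}(t)-\bm{x}_0\|\leq Mt$ ensures, for $\delta$ small, that all the trajectories in question remain on $[0,\delta]$ inside the neighbourhood of $\bm{x}_0$ on which $V^-$ compares with $v$.

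\textbf{Supersolution.} Suppose $V^--v$ attains a local minimum at $\bm{x}_0$; normalise so $V^-(\bm{x}_0)=v(\bm{x}_0)$ and $V^-\geq v$ nearby. It remains to show $\gamma v(\bm{x}_0)-H^-(\bm{x}_0,\frac{\partial v}{\partial\bm{x}}\mid_{\bm{x}_0})\geq 0$; assume instead it is $\leq-\theta<0$. For $\delta$ small, statement~1 of Lemma~\ref{smooth} supplies a \emph{single fixed} perturbation $\bm{d}(\cdot)\in\mathcal{D}$ making the displayed derivative $\geq\tfrac{\theta}{2}e^{-\gamma s}$ along the trajectory for \emph{every} $\bm{\alpha}(\cdot)\in\Gamma$. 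Integrating and then using $V^-\geq v$ at $\bm{\phi}_{\bm{x}_0}^{\bm{\alpha}(\bm{d}),\bm{d}}(\delta)$, we obtain, for every $\bm{\alpha}$,
\begin{equation*}
\sup_{\bm{d}'(\cdot)\in\mathcal{D}}\max\Big\{e^{-\gamma\delta}V^-(\bm{\phi}_{\bm{x}_0}^{\bm{\alpha}(\bm{d}'),\bm{d}'}(\delta)),\sup_{\tau\in[0,\delta]}e^{-\gamma\tau}h(\bm{\phi}_{\bm{x}_0}^{\bm{\alpha}(\bm{d}'),\bm{d}'}(\tau))\Big\}\geq e^{-\gamma\delta}v(\bm{\phi}_{\bm{x}_0}^{\bm{\alpha}(\bm{d}),\bm{d}}(\delta))\geq V^-(\bm{x}_0)+\kappa.
\end{equation*}
Taking the infimum over $\bm{\alpha}$ and invoking \eqref{dy1} at $t=\delta$ gives $V^-(\bm{x}_0)\geq V^-(\bm{x}_0)+\kappa$, a contradiction.

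\textbf{Subsolution.} Suppose $V^--v$ attains a local maximum at $\bm{x}_0$; normalise so $V^-(\bm{x}_0)=v(\bm{x}_0)$ and $V^-\leq v$ nearby. If $V^-(\bm{x}_0)=h(\bm{x}_0)$ the minimum in \eqref{sub} is $\leq 0$; otherwise $c:=V^-(\bm{x}_0)-h(\bm{x}_0)>0$ and it remains to show $\gamma v(\bm{x}_0)-H^-(\bm{x}_0,\frac{\partial v}{\partial\bm{x}}\mid_{\bm{x}_0})\leq 0$. Assume instead it is $\geq\theta>0$. For $\delta$ small, statement~2 of Lemma~\ref{smooth} supplies a \emph{single} non-anticipative strategy $\bm{\alpha}(\cdot)\in\Gamma$ making the displayed derivative $\leq-\tfrac{\theta}{2}e^{-\gamma s}$ along the trajectory for \emph{every} $\bm{d}(\cdot)\in\mathcal{D}$. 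Integrating and using $V^-\leq v$ at $\bm{\phi}_{\bm{x}_0}^{\bm{\alpha}(\bm{d}),\bm{d}}(\delta)$ yields $e^{-\gamma\delta}V^-(\bm{\phi}_{\bm{x}_0}^{\bm{\alpha}(\bm{d}),\bm{d}}(\delta))\leq V^-(\bm{x}_0)-\kappa$ uniformly in $\bm{d}$, and continuity of $h$ together with $h(\bm{x}_0)<V^-(\bm{x}_0)$ yields $\sup_{\tau\in[0,\delta]}e^{-\gamma\tau}h(\bm{\phi}_{\bm{x}_0}^{\bm{\alpha}(\bm{d}),\bm{d}}(\tau))\leq V^-(\bm{x}_0)-\tfrac{c}{2}$ uniformly in $\bm{d}$, both provided $\delta$ is small. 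Hence the $\max$-expression inside \eqref{dy1} at $t=\delta$ is $\leq V^-(\bm{x}_0)-\min\{\kappa,c/2\}$ for every $\bm{d}$; since the left-hand side of \eqref{dy1} is the infimum over $\bm{\alpha}$ and this particular $\bm{\alpha}$ is admissible, we get $V^-(\bm{x}_0)\leq V^-(\bm{x}_0)-\min\{\kappa,c/2\}$, a contradiction.

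The differential-inequality bookkeeping and the small-$\delta$ continuity estimates are routine; the point I would be most careful about is matching the quantifier order of \eqref{dy1}, which is $\inf_{\bm{\alpha}}\sup_{\bm{d}}$, with the two complementary forms of Lemma~\ref{smooth}: for the supersolution inequality the adversary may commit to one perturbation that defeats every strategy (statement~1, matching the inner $\sup_{\bm{d}}$), whereas for the subsolution inequality the ego player genuinely needs a non-anticipative strategy that is robust against every perturbation (statement~2, matching the outer $\inf_{\bm{\alpha}}$), and it is exactly the uniformity of Lemma~\ref{smooth} over the opposing player's choices that lets the substituted inequality survive the outer $\inf$ or $\sup$. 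For $V^+$ the scheme is identical with the roles of $\inf$ and $\sup$ exchanged: statement~4 supplies a strategy $\bm{\beta}$ for the supersolution inequality and statement~3 a fixed control $\bm{u}$ for the subsolution inequality, arguing throughout with \eqref{dy2}.
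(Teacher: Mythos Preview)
Your proposal is correct and follows essentially the same route as the paper's proof: contradict the sub-/super-solution inequalities via Lemma~\ref{smooth}, integrate along the resulting trajectory, compare $V^-$ with $v$ on a small neighbourhood, and close with the dynamic programming principle \eqref{dy1}. Your version is slightly tidier in two cosmetic respects---you integrate $e^{-\gamma s}v(\bm{y}(s))$ directly rather than invoking Gr\"onwall (the inequality is linear, so both give the same constant $\kappa=\tfrac{\theta}{2\gamma}(1-e^{-\gamma\delta})$), and you exploit the uniformity of Lemma~\ref{smooth} over the opposing player to avoid the $\epsilon$-approximate minimisers/maximisers that the paper introduces---but the logical skeleton, the quantifier matching you flag, and the handling of the obstacle term via $V^-\geq h$ are all the same.
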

 \begin{proof}
Likewise, we just prove the statement pertinent to $V^-$.
We will prove that $V^{-}$ is both viscosity sub and super-solution to \eqref{HJB1} according to Definition \ref{viscosity}.

Firstly, we prove that $V^{-}$ is a sub-solution to \eqref{HJB1}. Let $v\in C^{\infty}(\mathbb{R}^n)$ such that $V^{-}-v$ attains a local maximum at $\bm{x}_0$. Without loss of generality, assume that this maximum is zero, i.e. $V^{-}(\bm{x}_0)=v(\bm{x}_0)$. According to the continuity of $V^-(\bm{x})$ and $v(\bm{x})$, there exists a positive value $\overline{\delta}$ such that $$V^{-}(\bm{x})-v(\bm{x})\leq 0$$ for $\bm{x}$ satisfying $\|\bm{x}-\bm{x}_0\|\leq \overline{\delta}$. Suppose \eqref{sub} is false. Then there definitely exists a positive value $\epsilon_1$ such that
\begin{equation}
\label{1}
h(\bm{x}_0)\leq v(\bm{x}_0)-\epsilon_1~\text{and}
\end{equation}
 \begin{equation}
 \label{2}
 \gamma v(\bm{x}_0)-H^{-}(\bm{x}_0,\frac{\partial v(\bm{x})}{\partial \bm{x}}\mid_{\bm{x}=\bm{x}_0})\geq \epsilon_1
 \end{equation}
hold. Therefore, for the former inequality, i.e., $h(\bm{x}_0)\leq v(\bm{x}_0)-\epsilon_1$, there exists a sufficiently small $\delta_1>0$ with $\delta_1\leq \overline{\delta}$ such that for $\bm{x}$ satisfying $\|\bm{x}-\bm{x}_0\|\leq \delta_1$ and $t$ satisfying $0 \leq t \leq \delta_1$, $$e^{-\gamma t}h(\bm{x})\leq v(\bm{x}_0)-\frac{\epsilon_1}{2}.$$ According to Lemma \ref{smooth}, \eqref{2} implies that for sufficiently small $\delta>0$, there exists
a strategy $\bm{\alpha}_1(\cdot)\in \Gamma$ such that for all $\bm{d}(\cdot)\in \mathcal{D}$ and all $s \in [0,\delta]$,
\begin{equation}
\label{3}
\begin{split}
\gamma v(\bm{\phi}_{\bm{x}_0}^{\bm{\alpha}_1(\bm{d}),\bm{d}}(s))-\frac{\partial v(\bm{x})}{\partial \bm{x}}\mid_{\bm{x}=\bm{\phi}_{\bm{x}_0}^{\bm{\alpha}_1(\bm{d}),\bm{d}}(s)}\cdot \bm{f}(\bm{\phi}_{\bm{x}_0}^{\bm{\alpha}_1(\bm{d}),\bm{d}}(s),\bm{\alpha}_1(\bm{d})(s),\bm{d}(s))\geq \frac{\epsilon_1}{2}.
\end{split}
\end{equation}
$\delta$ can be chosen such that $\|\bm{\phi}_{\bm{x}_0}^{\bm{\alpha}_1(\bm{d}),\bm{d}}(s)-\bm{x}_0\|\leq \delta_1, \forall s \in [0,\delta], \forall \bm{d}(\cdot)\in \mathcal{D}.$ Since $v\in C^{\infty}(\mathbb{R}^n)$, by applying Gr\"{o}nwall's inequality \cite{gronwall1919} to \eqref{3} with the time interval [0, $\delta$], we have
\begin{equation}
v(\bm{\phi}_{\bm{x}_0}^{\bm{\alpha}_1(\bm{d}),\bm{d}}(\delta))\leq e^{\delta \gamma}v(\bm{x}_0)+\frac{\epsilon_1}{2\gamma}(1-e^{\delta \gamma}).
\end{equation}
Therefore,
\begin{equation}
e^{-\delta \gamma}v(\bm{\phi}_{\bm{x}_0}^{\bm{\alpha}_1(\bm{d}),\bm{d}}(\delta))\leq v(\bm{x}_0)-\frac{\epsilon_1}{2\gamma}(1-e^{-\delta \gamma}).
\end{equation}
Furthermore, since $V^{-}(\bm{x})\leq v(\bm{x})$ for $\bm{x}$ satisfying $\|\bm{x}-\bm{x}_0\|\leq \delta_1$ with $V^-(\bm{x}_0)=v(\bm{x}_0)$ as well as $V^{-}\geq 0$, we have 
\[e^{-\delta \gamma}V^-(\bm{\phi}_{\bm{x}_0}^{\bm{\alpha}_1(\bm{d}),\bm{d}}(\delta))\leq V^-(\bm{x}_0)-\frac{\epsilon_1}{2\gamma}(1-e^{-\delta \gamma}).\] Therefore, according to \eqref{dy1}, we finally have
\begin{equation}
\label{4}
\begin{split}
V^-(\bm{x}_0)&=\inf_{\bm{\alpha}(\cdot)\in \Gamma}\sup_{\bm{d}(\cdot)\in \mathcal{D}}\max\{e^{-\gamma \delta}V^{-}(\bm{\phi}_{\bm{x}_0}^{\bm{\alpha}(\bm{d}),\bm{d}}(\delta)),\sup_{\tau\in [0,\delta]} e^{-\gamma \tau}h(\bm{\phi}_{\bm{x}_0}^{\bm{\alpha}(\bm{d}),\bm{d}}(\tau))\}\\
&\leq \sup_{\bm{d}(\cdot)\in \mathcal{D}}\max\{e^{-\gamma \delta}V^{-}(\bm{\phi}_{\bm{x}_0}^{\bm{\alpha}_1(\bm{d}),\bm{d}}(\delta)),\sup_{\tau\in [0,\delta]} e^{-\gamma \tau}h(\bm{\phi}_{\bm{x}_0}^{\bm{\alpha}_1(\bm{d}),\bm{d}}(\tau))\}\\
&\leq \max\{e^{-\gamma \delta}V^{-}(\bm{\phi}_{\bm{x}_0}^{\bm{\alpha}_1(\bm{d}_1),\bm{d}_1}(\delta)),\sup_{\tau\in [0,\delta]} e^{-\gamma \tau}h(\bm{\phi}_{\bm{x}_0}^{\bm{\alpha}_1(\bm{d}_1),\bm{d}_1}(\tau))\}+\epsilon_3\\
&\leq V^-(\bm{x}_0)-\min\{\frac{\epsilon_1}{2},\frac{\epsilon_1}{2\gamma}(1-e^{-\delta \gamma})\}+\epsilon_3\\
&<V^-(\bm{x}_0),
\end{split}
\end{equation}
which is a contradiction. In \eqref{4}, $\bm{d}_1(\cdot)\in \mathcal{D}$ satisfies
\begin{equation}
\begin{split}
&\sup_{\bm{d}(\cdot)\in \mathcal{D}}\max\{e^{-\gamma \delta}V^{-}(\bm{\phi}_{\bm{x}_0}^{\bm{\alpha}_1(\bm{d}),\bm{d}}(\delta)),\sup_{\tau\in [0,\delta]} e^{-\gamma \tau}h(\phi_{\bm{x}_0}^{\bm{\alpha}_1(\bm{d}),\bm{d}}(\tau))\}\\
&\leq \max\{e^{-\gamma \delta}V^{-}(\bm{\phi}_{\bm{x}_0}^{\bm{\alpha}_1(\bm{d}_1),\bm{d}_1}(\delta)),\sup_{\tau\in [0,\delta]} e^{-\gamma \tau}h(\phi_{\bm{x}_0}^{\bm{\alpha}_1(\bm{d}_1),\bm{d}_1}(\tau))\}+\epsilon_3
\end{split}
\end{equation}
with $0<\epsilon_3<\min\{\frac{\epsilon_1}{2},\frac{\epsilon_1}{2\gamma}(1-e^{-\delta \gamma})\}$. Consequently, $V^{-}$ is a subsolution to \eqref{HJB1}.

In what follows we prove that $V^{-}$ is a super-solution to \eqref{HJB1}. Let $v\in C^{\infty}(\mathbb{R}^n)$ such that $V^{-}-v$ attains a local minimum at $\bm{x}_0$. Without loss of generality, assume that this minimum is zero, i.e., $V^{-}(\bm{x}_0)=v(\bm{x}_0)$. Therefore, there exists a positive value $\overline{\delta}$ such that $V^-(\bm{x})-v(\bm{x})\geq 0$ for $\bm{x}$ satisfying $\|\bm{x}-\bm{x}_0\|\leq \overline{\delta}$.
Assume that  \eqref{super} is false. Since $V^{-}(\bm{x})\geq h(\bm{x})$ for $\bm{x}\in \mathbb{R}^n$ according to \eqref{dy1}, $v(\bm{x}_0)\geq h(\bm{x}_0)$ holds. Therefore,
\begin{equation}
\label{5}
\gamma v(\bm{x}_0)-H^-(\bm{x}_0,\frac{\partial v(\bm{x})}{\partial \bm{x}}\mid_{\bm{x}=\bm{x}_0})<0
\end{equation}
holds, i.e., there exists a positive value $\theta>0$ such that
\begin{equation}
\label{6}
\gamma v(\bm{x}_0)-H^-(\bm{x}_0,\frac{\partial v(\bm{x})}{\partial \bm{x}}\mid_{\bm{x}=\bm{x}_0})<-\theta.
\end{equation}
According to Lemma \ref{smooth}, we have that for sufficiently small $\delta>0$, there exists $\bm{d}_1(\cdot)\in \mathcal{D}$ such that for all strategies $\bm{\alpha}(\cdot)\in \Gamma$ and all $s \in [0,\delta]$,
\begin{equation}
\label{7}
\begin{split}
\gamma v(\bm{\phi}_{\bm{x}_0}^{\bm{\alpha}(\bm{d}_1),\bm{d}_1}(s))-\frac{\partial v(\bm{x})}{\partial \bm{x}}\mid_{\bm{x}=\bm{\phi}_{\bm{x}_0}^{\bm{\alpha}(\bm{d}_1),\bm{d}_1}(s)}\cdot \bm{f}(\bm{\phi}_{\bm{x}_0}^{\bm{\alpha}(\bm{d}_1),\bm{d}_1}(s),\bm{\alpha}(\bm{d}_1)(s),\bm{d}_1(s))\leq -\frac{\theta}{2}.
\end{split}
\end{equation}
$\delta$ can be chosen such that $\|\bm{\phi}_{\bm{x}_0}^{\bm{\alpha}(\bm{d}_1),\bm{d}_1}(s)-\bm{x}_0\|\leq \overline{\delta}, \forall s \in [0,\delta], \forall \bm{\alpha}(\cdot)\in \Gamma.$

By applying Gr\"{o}nwall's inequality \cite{gronwall1919} to \eqref{7} with the time interval [0, $\delta$], we obtain 
\begin{equation}
v(\bm{\phi}_{\bm{x}_0}^{\bm{\alpha}(\bm{d}_1),\bm{d}_1}(\delta))\geq e^{\delta \gamma}v(\bm{x}_0)-\frac{\theta}{2\gamma}(1-e^{\delta \gamma}).
\end{equation}
Therefore,
\begin{equation}
e^{-\delta \gamma}v(\bm{\phi}_{\bm{x}_0}^{\bm{\alpha}(\bm{d}_1),\bm{d}_1}(\delta))\geq v(\bm{x}_0)+\frac{\theta}{2\gamma}(1-e^{-\delta \gamma}).
\end{equation}
Furthermore, since $V^{-}\geq v$ for $\bm{x}\in \{\bm{x}\mid \|\bm{x}-\bm{x}_0\|\leq \overline{\delta}\}$ with $V^-(\bm{x}_0)=v(\bm{x}_0)$ as well as $V^-(\bm{x})\geq 0$ over $\bm{x}\in \mathbb{R}^n$, we have 
\[e^{-\delta \gamma}V^-(\bm{\phi}_{\bm{x}_0}^{\bm{\alpha}(\bm{d}_1),\bm{d}_1}(\delta))\geq V^-(\bm{x}_0)+\frac{\theta}{2\gamma}(1-e^{-\delta \gamma}).\] Therefore, according to \eqref{dy1}, we finally have
\begin{equation}
\label{8}
\begin{split}
V^-(\bm{x}_0)&=\inf_{\bm{\alpha}(\cdot)\in \Gamma}\sup_{\bm{d}(\cdot)\in \mathcal{D}}\max\{e^{-\gamma \delta}V^{-}(\bm{\phi}_{\bm{x}_0}^{\bm{\alpha}(\bm{d}),\bm{d}}(\delta)),\sup_{\tau\in [0,\delta]} e^{-\gamma \tau}h(\bm{\phi}_{\bm{x}_0}^{\bm{\alpha}(\bm{d}),\bm{d}}(\tau))\}\\
&\geq \sup_{\bm{d}(\cdot)\in \mathcal{D}}\max\{e^{-\gamma \delta}V^{-}(\bm{\phi}_{\bm{x}_0}^{\bm{\alpha}_1(\bm{d}),\bm{d}}(\delta)),\sup_{\tau\in [0,\delta]} e^{-\gamma \tau}h(\bm{\phi}_{\bm{x}_0}^{\bm{\alpha}_1(\bm{d}),\bm{d}}(\tau))\}-\epsilon_1\\
&\geq \max\{e^{-\gamma \delta}V^{-}(\bm{\phi}_{\bm{x}_0}^{\bm{\alpha}_1(\bm{d}_1),\bm{d}_1}(\delta)),\sup_{\tau\in [0,\delta]} e^{-\gamma \tau}h(\bm{\phi}_{\bm{x}_0}^{\bm{\alpha}_1(\bm{d}_1),\bm{d}_1}(\tau))\}-\epsilon_1\\
&\geq V^-(\bm{x}_0)+\frac{\theta}{2\gamma}(1-e^{-\delta \gamma})-\epsilon_1>V^-(\bm{x}_0),
\end{split}
\end{equation}
which is a contradiction. In \eqref{8}, $\bm{\alpha}_1(\cdot)\in\Gamma$ satisfies
\begin{equation}
\begin{split}
&\inf_{\bm{\alpha}(\cdot)\in \Gamma}\sup_{\bm{d}(\cdot)\in \mathcal{D}}\max\{e^{-\gamma \delta}V^{-}(\bm{\phi}_{\bm{x}_0}^{\bm{\alpha}(\bm{d}),\bm{d}}(\delta)),\sup_{\tau\in [0,\delta]} e^{-\gamma \tau}h(\bm{\phi}_{\bm{x}_0}^{\bm{\alpha}(\bm{d}),\bm{d}}(\tau))\}\\
&\geq \sup_{\bm{d}(\cdot)\in \mathcal{D}}\max\{e^{-\gamma \delta}V^{-}(\bm{\phi}_{\bm{x}_0}^{\bm{\alpha}_1(\bm{d}),\bm{d}}(\delta)),\sup_{\tau\in [0,\delta]} e^{-\gamma \tau}h(\bm{\phi}_{\bm{x}_0}^{\bm{\alpha}_1(\bm{d}),\bm{d}}(\tau))\}-\epsilon_1
\end{split}
\end{equation}
with $0<\epsilon_1<\frac{\theta}{2\gamma}(1-e^{-\delta \gamma})$. Thus, $V^-$ is a supersolution to \eqref{HJB1}.

Therefore, we conclude that $V^-$ is a viscosity solution to \eqref{HJB1}.
 \end{proof}
   
Furthermore, we show the uniqueness of the Lipschitz continuous and bounded viscosity solutions to \eqref{HJB1} and \eqref{HJB2}. 
 \begin{theorem}
 \label{unique1}
 $V^{-}$ and $V^{+}$ are respectively the unique bounded and Lipschitz continuous viscosity solution to \eqref{HJB1} and \eqref{HJB2}.
 \end{theorem}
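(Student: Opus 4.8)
\emph{Plan.} Existence is already in hand: combining Theorem~\ref{viscosity1}, Proposition~\ref{bounded} and Lemma~\ref{cont}, $V^-$ (resp.\ $V^+$) is a bounded Lipschitz continuous viscosity solution of \eqref{HJB1} (resp.\ \eqref{HJB2}). So the whole task is uniqueness, and the plan is to prove a \emph{comparison principle}: every bounded continuous viscosity subsolution of \eqref{HJB1} lies below every bounded continuous viscosity supersolution. Given that, if $W$ is any other bounded Lipschitz viscosity solution of \eqref{HJB1}, then $V^-\le W$ and $W\le V^-$ (each being simultaneously sub- and supersolution), so $W=V^-$; likewise for \eqref{HJB2}. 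I would treat only \eqref{HJB1}, noting that $H^+$ from \eqref{lower_h} obeys the same structural estimates as $H^-$, so the argument for \eqref{HJB2} is verbatim.

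\emph{Structural estimates.} First I would record two facts about $H^-$. Using the globally Lipschitz representative of the Remark, $\|\bm f(\bm x,\bm u,\bm d)-\bm f(\bm y,\bm u,\bm d)\|\le L_{\bm f}\|\bm x-\bm y\|$ and hence (take $\sup_{\bm d}\inf_{\bm u}$ on both sides) $|H^-(\bm x,\bm p)-H^-(\bm y,\bm p)|\le L_{\bm f}\,\|\bm p\|\,\|\bm x-\bm y\|$; and $\|\bm f(\bm x,\bm u,\bm d)\|\le C_0+L_{\bm f}\|\bm x\|$ gives $|H^-(\bm x,\bm p)-H^-(\bm x,\bm q)|\le (C_0+L_{\bm f}\|\bm x\|)\,\|\bm p-\bm q\|$. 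The coefficient $\gamma>0$ in \eqref{HJB1} supplies strict monotonicity in the zeroth-order term, which is exactly what makes comparison hold without any inward/outward boundary qualification.

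\emph{Doubling of variables.} Let $V_1$ be a bounded subsolution, $V_2$ a bounded supersolution, set $\sigma:=\sup_{\mathbb R^n}(V_1-V_2)$ and suppose $\sigma>0$. For $\epsilon,\beta>0$ put $\varphi(\bm z):=\log(1+\|\bm z\|^2)$ and
\[
\Phi_{\epsilon,\beta}(\bm x,\bm y):=V_1(\bm x)-V_2(\bm y)-\tfrac{1}{2\epsilon}\|\bm x-\bm y\|^2-\beta\bigl(\varphi(\bm x)+\varphi(\bm y)\bigr).
\]
Since $V_1-V_2$ is bounded and $-\beta\varphi\to-\infty$, the sup of $\Phi_{\epsilon,\beta}$ is attained at some $(\bm x_*,\bm y_*)$, with the usual bounds $\beta\varphi(\bm x_*),\beta\varphi(\bm y_*)\le C$, $\tfrac1\epsilon\|\bm x_*-\bm y_*\|^2\to0$, $\|\bm x_*-\bm y_*\|\to0$ as $\epsilon\to0$, and $\sup\Phi_{\epsilon,\beta}\ge\sigma/2$ once $\beta$ is small. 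The map $\bm x\mapsto\tfrac1{2\epsilon}\|\bm x-\bm y_*\|^2+\beta\varphi(\bm x)$ is a smooth test function touching $V_1$ from above at $\bm x_*$, and symmetrically $V_2$ is touched from below at $\bm y_*$; with $\bm p:=\tfrac1\epsilon(\bm x_*-\bm y_*)$ the test gradients are $\bm p+\beta D\varphi(\bm x_*)$ and $\bm p-\beta D\varphi(\bm y_*)$. The supersolution property gives both $V_2(\bm y_*)\ge h(\bm y_*)$ and $\gamma V_2(\bm y_*)-H^-(\bm y_*,\bm p-\beta D\varphi(\bm y_*))\ge0$, while the subsolution property splits: either $V_1(\bm x_*)\le h(\bm x_*)$, whence $V_1(\bm x_*)-V_2(\bm y_*)\le h(\bm x_*)-h(\bm y_*)\le L_h\|\bm x_*-\bm y_*\|\to0$, contradicting $\sup\Phi_{\epsilon,\beta}\ge\sigma/2$; or $\gamma V_1(\bm x_*)-H^-(\bm x_*,\bm p+\beta D\varphi(\bm x_*))\le0$. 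In the latter case, subtract the two Hamiltonian inequalities and apply the structural estimates together with $\|D\varphi(\bm z)\|\,(1+\|\bm z\|)\le2$ to bound the penalization error by $O(\beta)$ and $L_{\bm f}\|\bm p\|\,\|\bm x_*-\bm y_*\|=L_{\bm f}\tfrac1\epsilon\|\bm x_*-\bm y_*\|^2\to0$, obtaining $\gamma\bigl(V_1(\bm x_*)-V_2(\bm y_*)\bigr)\le o_\epsilon(1)+O(\beta)$. Letting $\epsilon\to0$ then $\beta\to0$ yields $\gamma\sigma\le0$, a contradiction; hence $V_1\le V_2$, and swapping roles gives $V^-$ unique. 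The same scheme with $H^+$ settles \eqref{HJB2}.

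\emph{Main obstacle.} The delicate point is the unboundedness of $\mathbb R^n$ together with the merely linear growth of $\bm f$ (hence of $H^-$ in $\bm x$): a crude penalization such as $\beta\|\bm x\|^2$ or $\beta(1+\|\bm x\|^2)^{1/2}$ leaves a perturbation term in the Hamiltonian that does \emph{not} vanish with $\beta$, so the logarithmic weight $\varphi(\bm z)=\log(1+\|\bm z\|^2)$ — coercive enough to localize the supremum yet with $\|D\varphi\|(1+\|\cdot\|)$ globally bounded — is essential, and checking that it reconciles these two opposing demands is the real work. A secondary, routine nuisance is the case analysis forced by the $\min\{\cdot,\cdot\}$ obstacle structure at $(\bm x_*,\bm y_*)$ and verifying that $C_0,L_{\bm f},L_h$ can be taken uniform over the $\beta$-bounded region containing the maximizers.
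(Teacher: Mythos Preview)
Your proposal is correct and follows the same overall architecture as the paper's proof: uniqueness via a comparison principle, established by doubling of variables on $\mathbb{R}^n\times\mathbb{R}^n$, with the $\min\{\cdot,\cdot\}$ obstacle handled by a case split at the maximizer. The one substantive technical difference is the far-field penalization used to tame the unbounded domain and the linear growth of $\bm f$. You take the logarithmic weight $\varphi(\bm z)=\log(1+\|\bm z\|^2)$, whose virtue is exactly the one you name: $\|D\varphi(\bm z)\|(1+\|\bm z\|)$ is globally bounded, so the Hamiltonian perturbation is a clean $O(\beta)$ that is sent to zero at the end. The paper instead penalizes with $\langle\bm z\rangle^m=(1+\|\bm z\|^2)^{m/2}$; there the perturbation term $\delta m K(\langle\bar{\bm x}\rangle^m+\langle\bar{\bm y}\rangle^m)$ does \emph{not} vanish with $\delta$, but by choosing $0<m\le\gamma/K$ it is absorbed into the subtracted penalization $\delta(\langle\bar{\bm x}\rangle^m+\langle\bar{\bm y}\rangle^m)$ itself via the zeroth-order $\gamma$ term. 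A secondary difference: your argument needs only bounded \emph{continuous} sub- and supersolutions (you rely on the standard fact $\tfrac{1}{\epsilon}\|\bm x_*-\bm y_*\|^2\to0$), whereas the paper explicitly uses Lipschitz continuity of $V_1,V_2$ to obtain the sharper bound $\tfrac{1}{\epsilon}\|\bar{\bm x}-\bar{\bm y}\|^2\le w\sqrt{\epsilon}$ feeding into its Hamiltonian estimate. Both devices are standard in viscosity theory; yours is arguably a little cleaner and slightly more general, the paper's is the textbook route of Bardi--Capuzzo-Dolcetta.
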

 
\begin{proof}
We just show the uniqueness of the Lipschitz continuous and bounded viscosity solution to \eqref{HJB1}. We first prove a comparison principle: If $V_1$ and $V_2$ are bounded Lipschitz continuous functions over $\bm{x}\in \mathbb{R}^n$, and they are respectively a viscosity sub and supersolution to \eqref{HJB1}, then $V_1\leq V_2$ in $\mathbb{R}^n$. Obviously, if such comparison principle holds, the uniqueness of bounded Lipschitz continuous solutions to \eqref{HJB1} is guaranteed. For ease of exposition, we define  $H^-(\overline{\bm{x}})=H^-(\overline{\bm{x}},\frac{\partial \phi(\bm{x})}{\partial \bm{x}}\mid_{\bm{x}=\overline{\bm{x}}})$ and $H^-(\overline{\bm{y}})=H^-(\overline{\bm{y}},\frac{\partial \psi(\bm{y})}{\partial \bm{y}}\mid_{\bm{y}=\overline{\bm{y}}}).$

Let
\[\Phi(\bm{x},\bm{y})=V_1(\bm{x})-V_2(\bm{y})-\frac{\|\bm{x}-\bm{y}\|^2}{2\epsilon}-\delta (\langle\bm{x}\rangle^m+\langle\bm{y}\rangle^m),\]
where $\langle\bm{x}\rangle=(1+\|\bm{x}\|^2)^{\frac{1}{2}}$, and $\epsilon,\delta,m$ are positive parameters.  Assume that there are $\beta>0$ and $\bm{z}$ such that $V_1(\bm{z})-V_2(\bm{z})=\beta$. We choose $\delta>0$ such that $2\delta\langle\bm{z}\rangle\leq \frac{\beta}{2}$ such that for $0<m\leq 1$,
\begin{equation}
\label{co1}
\frac{\beta}{2}<\beta-2\delta\langle \bm{z} \rangle^m=\Phi(\bm{z},\bm{z})\leq \sup \Phi(\bm{x},\bm{y}).
\end{equation}
Since $\Phi$ is continuous and $\lim_{\|\bm{x}\|+\|\bm{y}\|\rightarrow \infty}\Phi(\bm{x},\bm{y})=-\infty$, there exist $\overline{\bm{x}}$, $\overline{\bm{y}}$ such that
\begin{equation}
\label{co2}
\Phi(\overline{\bm{x}},\overline{\bm{y}})=\sup\Phi(\bm{x},\bm{y}).
\end{equation}
From the inequality $\Phi(\overline{\bm{x}},\overline{\bm{x}})+\Phi(\overline{\bm{y}},\overline{\bm{y}})\leq 2\Phi(\overline{\bm{x}},\overline{\bm{y}})$ we easily get
\begin{equation}
\label{co3}
\frac{\|\overline{\bm{x}}-\overline{\bm{y}}\|^2}{\epsilon}\leq V_1(\overline{\bm{x}})-V_1(\overline{\bm{y}})+V_2(\overline{\bm{x}})-V_2(\overline{\bm{y}}).
\end{equation}
Then the boundedness of $V_1$ and $V_2$ implies that
\begin{equation}
\label{co4}
\|\overline{\bm{x}}-\overline{\bm{y}}\|\leq c\sqrt{\epsilon}
\end{equation}
for a suitable constant $c$. By plugging \eqref{co4} into \eqref{co3} and using the Lipschitz continuity of $V_1$ and $V_2$ we get
\begin{equation}
\label{co5}
\frac{\|\overline{\bm{x}}-\overline{\bm{y}}\|}{\epsilon}\leq w \sqrt{\epsilon}
\end{equation}
for some constant $w$.

Next, define the continuously differentiable functions
\begin{equation}
\begin{split}
&\phi(\bm{x}):=V_2(\overline{\bm{y}})+\frac{\|\bm{x}-\overline{\bm{y}}\|^2}{2\epsilon}+\delta(\langle\bm{x}\rangle^m+\langle\overline{\bm{y}}\rangle^m),\\
&\psi(\bm{y}):=V_1(\overline{\bm{x}})-\frac{\|\overline{\bm{x}}-{\bm{y}}\|^2}{2\epsilon}-\delta(\langle\overline{\bm{x}}\rangle^m+\langle{\bm{y}}\rangle^m),
\end{split}
\end{equation}
and observe that $V_1-\phi$ attains its maximum at $\overline{\bm{x}}$ and $V_2-\psi$ attains its minimum at $\overline{\bm{y}}$. It is easy to compute
\begin{equation}
\begin{split}
&\frac{\partial \phi(\bm{x})}{\partial \bm{x}}\mid_{\bm{x}=\overline{\bm{x}}}=\frac{\overline{\bm{x}}-\overline{\bm{y}}}{\epsilon}+\lambda \overline{\bm{x}}, \lambda=\delta m\langle\overline{\bm{x}}\rangle^{m-2},\\
&\frac{\partial \psi(\bm{y})}{\partial \bm{y}}\mid_{\bm{y}=\overline{\bm{y}}}=\frac{\overline{\bm{x}}-\overline{\bm{y}}}{\epsilon}-\tau \overline{\bm{y}}, \tau=\delta m\langle\overline{\bm{y}}\rangle^{m-2}.
\end{split}
\end{equation}
Thus, we obtain that
\begin{equation}
\begin{split}
\min\big\{\gamma V_1(\overline{\bm{x}})-H^-(\overline{\bm{x}}),V_1(\overline{\bm{x}})-h(\overline{\bm{x}})\big\} \leq \min\big\{\gamma V_2(\overline{\bm{y}})-H^-(\overline{\bm{y}}),V_2(\overline{\bm{y}})-h(\overline{\bm{y}})\big\}.
\end{split}
\end{equation}
Further, we have that
\begin{equation}
\begin{split}
\min\big\{\gamma V_1(\overline{\bm{x}})-H^-(\overline{\bm{x}})-(\gamma V_2(\overline{\bm{y}})-H^-(\overline{\bm{y}})), V_1(\overline{\bm{x}})-h(\overline{\bm{x}})-(V_2(\overline{\bm{y}})-h(\overline{\bm{y}}))\big\}\leq 0.
\end{split}
\end{equation}
Obviously, either
\begin{equation}
\label{co7}
\begin{split}
\gamma V_1(\overline{\bm{x}})-H^-(\overline{\bm{x}})-(\gamma  V_2(\overline{\bm{y}})-H^-(\overline{\bm{y}}))\leq 0~\text{or}
\end{split}
\end{equation}
 \begin{equation}
\label{co8}
V_1(\overline{\bm{x}})-h(\overline{\bm{x}})-(V_2(\overline{\bm{y}})-h(\overline{\bm{y}}))\leq 0
\end{equation}
 holds. We will obtain a contradiction separately.

If \eqref{co7} holds,
\begin{equation}
\begin{split}
V_1(\overline{\bm{x}})-V_2(\overline{\bm{y}})\leq &\frac{1}{\gamma}(H^-(\overline{\bm{x}})-H^-(\overline{\bm{y}})
\leq \frac{1}{\gamma}(L_{\bm{f}}w \sqrt{\epsilon}+\delta m K(\langle\overline{\bm{y}}\rangle^m+\langle\overline{\bm{x}}\rangle^m+\epsilon)
\end{split}
\end{equation}
where $K=L_{\bm{f}}+\sup_{\bm{u}\in U,\bm{d}\in D}\{\|\bm{f}(\bm{0},\bm{u},\bm{d})\|\}$ and the last inequality can be obtained as follows:  \begin{equation}
\begin{split}
&H^-(\overline{\bm{x}})-H^-(\overline{\bm{y}})\\
&=\sup_{\bm{d}\in D}\inf_{\bm{u}\in U}\frac{\partial \phi(\bm{x})}{\partial \bm{x}}\mid_{\bm{x}=\overline{\bm{x}}}\cdot \bm{f}(\overline{\bm{x}},\bm{u},\bm{d})-\sup_{\bm{d}\in D}\inf_{\bm{u}\in U}\frac{\partial \psi(\bm{y})}{\partial \bm{y}}\mid_{\bm{y}=\overline{\bm{y}}}\cdot \bm{f}(\overline{\bm{y}},\bm{u},\bm{d})\\
&\leq \sup_{\bm{d}\in D}\big(\inf_{\bm{u}\in U}\frac{\partial \phi(\bm{x})}{\partial \bm{x}}\mid_{\bm{x}=\overline{\bm{x}}}\cdot \bm{f}(\overline{\bm{x}},\bm{u},\bm{d}) -\inf_{\bm{u}\in U}\frac{\partial \psi(\bm{y})}{\partial \bm{y}}\mid_{\bm{y}=\overline{\bm{y}}}\cdot \bm{f}(\overline{\bm{y}},\bm{u},\bm{d}) \big)\\
&\leq \inf_{\bm{u}\in U}\frac{\partial \phi(\bm{x})}{\partial \bm{x}}\mid_{\bm{x}=\overline{\bm{x}}}\cdot \bm{f}(\overline{\bm{x}},\bm{u},\bm{d}_1)-\inf_{\bm{u}\in U}\frac{\partial \psi(\bm{y})}{\partial \bm{y}}\mid_{\bm{y}=\overline{\bm{y}}}\cdot \bm{f}(\overline{\bm{y}},\bm{u},\bm{d}_1)+\frac{\epsilon}{2}\\
&\leq \frac{\partial \phi(\bm{x})}{\partial \bm{x}}\mid_{\bm{x}=\overline{\bm{x}}}\cdot \bm{f}(\overline{\bm{x}},\bm{u}_2,\bm{d}_1)-\frac{\partial \psi(\bm{y})}{\partial \bm{y}}\mid_{\bm{y}=\overline{\bm{y}}}\cdot \bm{f}(\overline{\bm{y}},\bm{u}_2,\bm{d}_1)+\epsilon\\
&=(\frac{\overline{\bm{x}}-\overline{\bm{y}}}{\epsilon}+\lambda \overline{\bm{x}})\cdot \bm{f}(\overline{\bm{x}},\bm{u}_2,\bm{d}_1)-(\frac{\overline{\bm{x}}-\overline{\bm{y}}}{\epsilon}-\tau \overline{\bm{y}})\cdot \bm{f}(\overline{\bm{y}},\bm{u}_2,\bm{d}_1)+\epsilon\\
&\leq \frac{\|\overline{\bm{x}}-\overline{\bm{y}}\|^2}{\epsilon}L_{\bm{f}}+\lambda \overline{\bm{x}}\cdot \bm{f}(\overline{\bm{x}},\bm{u}_2,\bm{d}_1)+\tau \overline{\bm{y}}\cdot \bm{f}(\overline{\bm{y}},\bm{u}_2,\bm{d}_1)+\epsilon \\
&\leq \frac{\|\overline{\bm{x}}-\overline{\bm{y}}\|^2}{\epsilon}L_{\bm{f}}+\lambda \overline{\bm{x}}\cdot (\bm{f}(\overline{\bm{x}},\bm{u}_2,\bm{d}_1)-\bm{f}(\bm{0},\bm{u}_2,\bm{d}_1)+\bm{f}(\bm{0},\bm{u}_2,\bm{d}_1))+\\
&~~~~~~~~~~~~~~~~~~~~~~~~~~~~~~~~~~~~~~~~~~~~\tau \overline{\bm{y}}\cdot (\bm{f}(\overline{\bm{y}},\bm{u}_2,\bm{d}_1)-\bm{f}(\bm{0},\bm{u}_2,\bm{d}_1)+\bm{f}(\bm{0},\bm{u}_2,\bm{d}_1))+\epsilon\\
&\leq \frac{\|\overline{\bm{x}}-\overline{\bm{y}}\|^2}{\epsilon}L_{\bm{f}}+\lambda L_{\bm{f}}\|\overline{\bm{x}}\|^2+\lambda \|\overline{\bm{x}}\|\|\bm{f}(\bm{0},\bm{u}_2,\bm{d}_1)\|+\tau L_{\bm{f}}\|\overline{\bm{y}}\|^2+\tau \|\overline{\bm{y}}\|\|\bm{f}(\bm{0},\bm{u}_2,\bm{d}_1)\|+\epsilon\\
&\leq \frac{\|\overline{\bm{x}}-\overline{\bm{y}}\|^2}{\epsilon}L_{\bm{f}}+\lambda K(1+\|\overline{\bm{x}}\|^2)+\tau K(1+\|\overline{\bm{y}}\|^2)+\epsilon\\
&\leq L_{\bm{f}}w \sqrt{\epsilon}+\delta m K(\langle\overline{\bm{y}}\rangle^m+\langle\overline{\bm{x}}\rangle^m)+\epsilon,
\end{split}
\end{equation}
where $\bm{d}_1$ satisfies
\begin{equation}
\begin{split}
&\sup_{\bm{d}\in D}\big(\inf_{\bm{u}\in U}\frac{\partial \phi(\bm{x})}{\partial \bm{x}}\mid_{\bm{x}=\overline{\bm{x}}}\cdot \bm{f}(\overline{\bm{x}},\bm{u},\bm{d})-\inf_{\bm{u}\in U}\frac{\partial \psi(\bm{y})}{\partial \bm{y}}\mid_{\bm{y}=\overline{\bm{y}}}\cdot \bm{f}(\overline{\bm{y}},\bm{u},\bm{d})\big)\\
&\leq \inf_{\bm{u}\in U}\frac{\partial \phi(\bm{x})}{\partial \bm{x}}\mid_{\bm{x}=\overline{\bm{x}}}\cdot \bm{f}(\overline{\bm{x}},\bm{u},\bm{d}_1)-\inf_{\bm{u}\in U}\frac{\partial \psi(\bm{y})}{\partial \bm{y}}\mid_{\bm{y}=\overline{\bm{y}}}\cdot \bm{f}(\overline{\bm{y}},\bm{u},\bm{d}_1)+\frac{\epsilon}{2}
\end{split}
\end{equation}
 and $\bm{u}_2$ satisfies 
 $$\inf_{\bm{u}\in U}\frac{\partial \psi(\bm{y})}{\partial \bm{y}}\mid_{\bm{y}=\overline{\bm{y}}}\cdot \bm{f}(\overline{\bm{y}},\bm{u},\bm{d}_1)\geq \frac{\partial \psi(\bm{y})}{\partial \bm{y}}\mid_{\bm{y}=\overline{\bm{y}}}\cdot \bm{f}(\overline{\bm{y}},\bm{u}_2,\bm{d}_1)-\frac{\epsilon}{2}.$$

 Therefore, choosing $0<m\leq \frac{\gamma}{K}$, we obtain
$$\Phi(\overline{\bm{x}},\overline{\bm{y}})\leq V_1(\overline{\bm{x}})-V_2(\overline{\bm{y}})-\delta(\langle \overline{\bm{x}}\rangle^m +\langle \overline{\bm{y}}\rangle^m)\leq \frac{1}{\gamma}(L_{\bm{f}}w \sqrt{\epsilon}+\epsilon).$$ $\Phi(\overline{\bm{x}},\overline{\bm{y}})$ can be smaller than $\frac{\beta}{2}$ for $\epsilon$ small enough, contradicting \eqref{co1} and \eqref{co2}.

If \eqref{co8} holds, $$\Phi(\overline{\bm{x}},\overline{\bm{y}})\leq V_1(\overline{\bm{x}})-V_2(\overline{\bm{y}})\leq h(\overline{\bm{x}})-h(\overline{\bm{y}})\leq L_h c\sqrt{\epsilon},$$ where $L_h$ is the Lipschitz constant over a local compact region in $\mathbb{R}^n$ covering $\overline{\bm{x}}$ and $\overline{\bm{y}}$, $\Phi(\overline{\bm{x}},\overline{\bm{y}})$ can be smaller than $\frac{\beta}{2}$ for $\epsilon$
small enough, contradicting \eqref{co1}.

 Above all, $V_1\leq V_2$ over $\bm{x}\in \mathbb{R}^n$. It is evident that if $U(\bm{x})$ is a bounded Lipschitz continuous viscosity solution to \eqref{HJB1}, then $U(\bm{x})=V^-(\bm{x})$ over $\bm{x}\in \mathbb{R}^n$, due to the fact that $U(\bm{x})$ and $V^-(\bm{x})$ are both sub and superviscosity solutions. Therefore, the uniqueness of the bounded Lipschitz continuous solutions to \eqref{HJB1} is guaranteed.
 \end{proof}

We continue exploiting more on $V^-$ and $V^+$ based on \eqref{HJB1} and \eqref{HJB2}. According to Lemma \ref{cont} stating that $V^-$ and $V^+$ are Lipschitz continuous, we have the following corollary.
  \begin{corollary}
 The Hamilton-Jacobi equation \eqref{HJB1}(\eqref{HJB2}) for $V^-$($V^+$) holds classically a.e. in $\mathbb{R}^n$, i.e. except on a set of measure $0$.
 \end{corollary}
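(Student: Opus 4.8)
The plan is to combine Rademacher's theorem with the elementary fact that a viscosity solution satisfies its equation pointwise wherever it is differentiable. By Lemma \ref{cont}, $V^-$ is locally Lipschitz continuous on $\mathbb{R}^n$, so Rademacher's theorem yields a set $\mathcal{N}\subseteq\mathbb{R}^n$ whose complement has Lebesgue measure zero and at every point of which $V^-$ is differentiable. It therefore suffices to prove that \eqref{HJB1} holds in the classical sense at each $\bm{x}_0\in\mathcal{N}$, and then to repeat the reasoning for $V^+$ and \eqref{HJB2}.

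Fix $\bm{x}_0\in\mathcal{N}$ and let $\bm{p}_0$ denote the gradient of $V^-$ at $\bm{x}_0$. Since $V^-$ is differentiable at $\bm{x}_0$, its sub- and superdifferential there both reduce to the singleton $\{\bm{p}_0\}$; using the standard characterization of these sets via smooth test functions \cite{Bardi1997}, I would pick $v_1,v_2\in C^{\infty}(\mathbb{R}^n)$ whose gradients at $\bm{x}_0$ both equal $\bm{p}_0$ and such that $V^--v_1$ attains a local maximum at $\bm{x}_0$ while $V^--v_2$ attains a local minimum at $\bm{x}_0$. Because $V^-$ is a viscosity solution of \eqref{HJB1} by Theorem \ref{viscosity1}, feeding $v_1$ into the subsolution inequality \eqref{sub} and $v_2$ into the supersolution inequality \eqref{super} gives
\[
\min\big\{\gamma V^-(\bm{x}_0)-H^-(\bm{x}_0,\bm{p}_0),\,V^-(\bm{x}_0)-h(\bm{x}_0)\big\}\leq 0
\]
together with the reverse inequality $\geq 0$, so the equation \eqref{HJB1} holds classically at $\bm{x}_0$. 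Since $\bm{x}_0\in\mathcal{N}$ was arbitrary, \eqref{HJB1} holds a.e.\ in $\mathbb{R}^n$; replacing $H^-$ by $H^+$ and invoking Theorem \ref{viscosity1} and Lemma \ref{cont} for $V^+$ establishes the corresponding statement for \eqref{HJB2}.

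The one step that needs genuine care, and which I would write out explicitly, is the construction of the smooth test functions $v_1$ and $v_2$ at a point of differentiability: a merely affine function realizing the gradient $\bm{p}_0$ need not touch $V^-$ from either side, so one must add a $C^{\infty}$ correction vanishing together with its gradient at $\bm{x}_0$ that nevertheless dominates the $o(\|\bm{x}-\bm{x}_0\|)$ remainder in the first-order expansion of $V^-$ about $\bm{x}_0$. This is classical — it is precisely the equivalence between the test-function formulation of Definition \ref{viscosity} and the sub/superdifferential formulation of viscosity solutions — and once it is available the rest of the argument is merely a matter of reading off the two viscosity inequalities.
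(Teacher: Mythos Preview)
Your proposal is correct and follows essentially the same approach as the paper: invoke Lemma~\ref{cont} for local Lipschitz continuity, apply Rademacher's theorem to get a.e.\ differentiability, and then use that a viscosity solution satisfies the equation classically at points of differentiability (via Theorem~\ref{viscosity1}). The paper's own proof is just a one-sentence version of this; your additional paragraph spelling out the construction of the smooth test functions $v_1,v_2$ is a welcome elaboration of a step the paper leaves implicit.
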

 \begin{proof}
 By Lemma \ref{cont}, $V^-$($V^+$) is Lipschitz and, hence, by Rademacher's theorem, they are differentiable a.e., which implies that the equation \eqref{HJB1}(\eqref{HJB2}) that $V^-$($V^+$) satisfies in Theorem \ref{viscosity1} holds classically in these points.
 \end{proof}

$V^{-}(\bm{x})\leq V^{+}(\bm{x})$ always holds for $\bm{x}\in \mathbb{R}^n$. Moreover, we have that $V^-=V^+$ under the Isaacs condition.
  \begin{theorem}
  \label{pluseq}
  $V^{-}(\bm{x})\leq V^{+}(\bm{x})$ holds for $\bm{x}\in \mathbb{R}^n$ and consequently $\mathcal{R}^+\subseteq \mathcal{R}^-$. Moreover,  if $H^-(\bm{x},\bm{p})=H^+(\bm{x},\bm{p})$ for $\bm{x}\in \mathbb{R}^n$ and $\bm{p}\in \mathbb{R}^n$, then $V^-=V^+$ and thus $\mathcal{R}^-=\mathcal{R}^+$.
  \end{theorem}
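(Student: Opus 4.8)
The plan is to avoid arguing directly with non-anticipative strategies -- which would force us into the fixed-point/time-discretization constructions familiar from the differential-games literature -- and instead route the whole statement through the Hamilton-Jacobi characterizations already in hand: $V^-$ is a bounded, Lipschitz continuous viscosity solution of the sup-inf equation \eqref{HJB1} and $V^+$ one of the inf-sup equation \eqref{HJB2} (Theorem \ref{viscosity1}, Proposition \ref{bounded}, Lemma \ref{cont}). The only genuinely new ingredient needed is the elementary minimax inequality relating the two Hamiltonians, after which the comparison principle from Theorem \ref{unique1} does the work.

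First I would record that $H^-(\bm{x},\bm{p})\le H^+(\bm{x},\bm{p})$ for every $\bm{x},\bm{p}\in\mathbb{R}^n$: comparing \eqref{upper_h} and \eqref{lower_h}, this is just $\sup_{\bm{d}\in D}\inf_{\bm{u}\in U}g(\bm{u},\bm{d})\le \inf_{\bm{u}\in U}\sup_{\bm{d}\in D}g(\bm{u},\bm{d})$ applied to $g(\bm{u},\bm{d})=\bm{p}\cdot \bm{f}(\bm{x},\bm{u},\bm{d})$. Next I would use this to upgrade $V^-$ from a viscosity solution of \eqref{HJB1} to a viscosity \emph{subsolution} of \eqref{HJB2}: if $v\in C^\infty(\mathbb{R}^n)$ and $V^--v$ has a local maximum at $\bm{x}_0$, then Theorem \ref{viscosity1} and Definition \ref{viscosity} give $\min\{\gamma V^-(\bm{x}_0)-H^-(\bm{x}_0,\nabla v(\bm{x}_0)),\,V^-(\bm{x}_0)-h(\bm{x}_0)\}\le 0$, and replacing $H^-$ by the larger $H^+$ only decreases the first entry of the $\min$, so $\min\{\gamma V^-(\bm{x}_0)-H^+(\bm{x}_0,\nabla v(\bm{x}_0)),\,V^-(\bm{x}_0)-h(\bm{x}_0)\}\le 0$, which is exactly \eqref{sub} for \eqref{HJB2}. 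Since $V^+$ is a (super)solution of \eqref{HJB2} and both functions are bounded and Lipschitz, the comparison principle for \eqref{HJB2} -- the inf-sup analogue of the one proved inside Theorem \ref{unique1} -- applied to the subsolution $V^-$ and the supersolution $V^+$ yields $V^-(\bm{x})\le V^+(\bm{x})$ on $\mathbb{R}^n$.

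The remaining conclusions are then immediate. Because $V^-\ge 0$ on $\mathbb{R}^n$ (Proposition \ref{bounded}), any $\bm{x}$ with $V^+(\bm{x})=0$ also satisfies $V^-(\bm{x})=0$; combined with $\mathcal{R}^+=\{\bm{x}\mid V^+(\bm{x})=0\}$ and $\mathcal{R}^-=\{\bm{x}\mid V^-(\bm{x})=0\}$ from Lemma \ref{inclusion}, this gives $\mathcal{R}^+\subseteq\mathcal{R}^-$. Finally, if $H^-(\bm{x},\bm{p})=H^+(\bm{x},\bm{p})$ for all $\bm{x},\bm{p}$, then \eqref{HJB1} and \eqref{HJB2} are one and the same equation, $V^-$ and $V^+$ are both bounded Lipschitz continuous viscosity solutions of it, and the uniqueness statement of Theorem \ref{unique1} forces $V^-=V^+$; hence $\mathcal{R}^-=\{V^-=0\}=\{V^+=0\}=\mathcal{R}^+$.

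The main point to be careful about -- more a bookkeeping matter than a real obstacle -- is that the comparison principle compares a sub- and a supersolution of \emph{the same} equation, so the crux is precisely the reinterpretation of $V^-$ as a subsolution of \eqref{HJB2}, which is what $H^-\le H^+$ buys us. One should also check in passing that the comparison-principle argument written out in the proof of Theorem \ref{unique1} uses no structural feature of $H^-$ beyond the Lipschitz-in-$\bm{p}$/linear-growth-in-$\bm{x}$ bounds it inherits from $\bm{f}$ -- bounds that $H^+$ inherits in exactly the same way -- so that the argument transcribes unchanged to \eqref{HJB2}.
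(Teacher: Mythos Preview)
Your proof is correct. The paper's own proof simply cites Corollary~2.2 in Chapter~VIII of \cite{Bardi1997} for both $V^-\le V^+$ and the equality under the Isaacs condition, and then invokes Lemma~\ref{inclusion} exactly as you do for the conclusions about $\mathcal{R}^\pm$. What you have done is unpack that citation: the Bardi--Capuzzo-Dolcetta argument is precisely the one you wrote---use $H^-\le H^+$ to recognize $V^-$ as a subsolution of the $H^+$ equation, then apply the comparison principle. So the approach is the same in substance; your version has the modest advantage of being self-contained within the paper (reusing the comparison principle already proved in Theorem~\ref{unique1}) rather than pointing to an external reference, and your closing remark that the comparison argument transcribes verbatim to \eqref{HJB2} is exactly the check one needs to make.
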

  \begin{proof}
The conclusions regarding that $V^{-}(\bm{x})\leq V^{+}(\bm{x})$ holds for $\bm{x}\in \mathbb{R}^n$ and $V^-=V^+$ can be assured by Corollary 2.2 in Chapter VIII of \cite{Bardi1997}.  It is obvious that  $\mathcal{R}^+\subseteq \mathcal{R}^-$ according to the fact that $V^{-}(\bm{x})\leq V^{+}(\bm{x})$ for $\bm{x}\in \mathbb{R}^n$ and Lemma \ref{inclusion}. 

If $V^-=V^+$, we have $\{\bm{x}\mid V^-(\bm{x})=0\}=\{\bm{x}\mid V^+(\bm{x})=0\}$. According to Lemma \ref{inclusion}, we have $\mathcal{R}^-=\mathcal{R}^+$.
\end{proof}

\begin{remark}
\label{remark1}
Since $\bm{f}(\bm{x},\bm{u},\bm{d}):\mathbb{R}^n\times U \times D \mapsto \mathbb{R}^n$ is continuous over $\bm{x}$, $\bm{u}$ and $\bm{d}$, according to Theorem 2.3 in Chapter VIII of \cite{Bardi1997}, if $U$ and $D$ are convex spaces, the sets $\{\bm{u}\in U\mid H(\bm{u},\overline{\bm{d}})\geq t\}$ and $\{\bm{d}\in D\mid H(\overline{\bm{u}},\bm{d})\geq t\}$ are convex for all $t\in \mathbb{R}$, $\overline{\bm{u}}\in U$, $\overline{\bm{d}}\in D$, where $H(\bm{u},\bm{d})=\bm{p}\cdot \bm{f}(\bm{x},\bm{u},\bm{d})$. Then $H^-(\bm{x},\bm{p})=H^{+}(\bm{x},\bm{p})$. 

The simplest system for Theorem 2.3 in Chapter VIII in \cite{Bardi1997} to hold is the one being affine in the control variables $\bm{u}\in U$ and $\bm{d}\in D$, that is, 
$\bm{f}(\bm{x},\bm{u},\bm{d})=\bm{f}_1(\bm{x})+\bm{f}_2(\bm{x})\bm{u}+\bm{f}_3(\bm{x})\bm{d}$, where $U$ and $D$ are convex compact sets in $\mathbb{R}^m$ and $\mathbb{R}^l$ respectively. 
\end{remark}

\section{Examples}
\label{examples}
In this section we illustrate our approach on one example. All computations were performed on an i7-7500U 2.70GHz CPU with 4GB RAM running Ubuntu 17. For numerical implementation, we employ the ROC-HJ solver \cite{bokanowski2013} \footnote{https://uma.ensta-paristech.fr/soft/ROC-HJ/} for solving Hamilton-Jacobi equations \eqref{HJB1} and \eqref{HJB2}.
\begin{example}
\label{ex1}
\textbf{Moore-Greitzer jet engine model.} We test our approach on the following polynomial system coming from \cite{sassi2012}, corresponding to a Moore-Greitzer model of a jet engine:
\begin{equation}
\begin{split}
&\dot{x}=-y-\frac{3}{2}x^2-\frac{1}{2}x^3+d,\\
&\dot{y}=(0.8076+u)x-0.9424y,
\end{split}
\end{equation}
where $\mathcal{X}=\{\bm{x}\mid h(\bm{x})\leq 0\}$ with $h(\bm{x})=\frac{x^2+y^2-0.25}{1+(x^2+y^2-0.25)^2}$, $d\in [-0.02,0.02]$ and $u\in [-0.01,0.01]$.

From \cite{sassi2012}, we know that $u(\bm{x})=0.8076x-0.9424y$ is a controller that guarantees the existence of a robust invariant set of the following system 
\begin{equation}
\begin{split}
&\dot{x}=-y-\frac{3}{2}x^2-\frac{1}{2}x^3+d,\\
&\dot{y}=u
\end{split}
\end{equation}
where $d\in [-0.02,0.02]$.  In our example we change the coefficient $0.8076$ of the variable $x$ in $u(\bm{x})$ to $0.8076+u$ with $u\in [-0.01,0.01]$. 

The level sets of viscosity solutions  $V^-(\bm{x})$ and $V^{+}(\bm{x})$ to \eqref{HJB1} and \eqref{HJB2} are illustrated in Fig. \ref{fig1} and \ref{fig2} respectively. The visualized results in Fig. \ref{fig1} and \ref{fig2} justify  Proposition \ref{bounded} that both $V^-(\bm{x})$ and $V^{+}(\bm{x})$ are non-negative over $\bm{x}\in \mathbb{R}^n$. 

On the other side, the corresponding zero level sets $\{\bm{x}\mid V^-(\bm{x})=0\}$ and $\{\bm{x}\mid V^+(\bm{x})=0\}$ are respectively showcased in Fig. \ref{fig1} and \ref{fig2} as well. According to Lemma \ref{inclusion}, $\mathcal{R}^-=\{\bm{x}\mid V^-(\bm{x})=0\}$ and $\mathcal{R}^+=\{\bm{x}\mid V^+(\bm{x})=0\}$.  The comparison between the two robust controlled invariant  sets $\mathcal{R}^-$ and $\mathcal{R}^+$ is demonstrated in Fig. \ref{fig5}.  It is difficult to distinguish them. Actually, they are the same: According to Remark \ref{remark1} and Theorem \ref{pluseq}, $\{\bm{x}\mid V^-(\bm{x})=0\} =\{\bm{x}\mid V^+(\bm{x})=0\}$ and $\mathcal{R}^-=\mathcal{R}^+$.

\begin{figure}
\begin{minipage}{.5\textwidth}
\includegraphics[width=2.5in,height=3.0in]{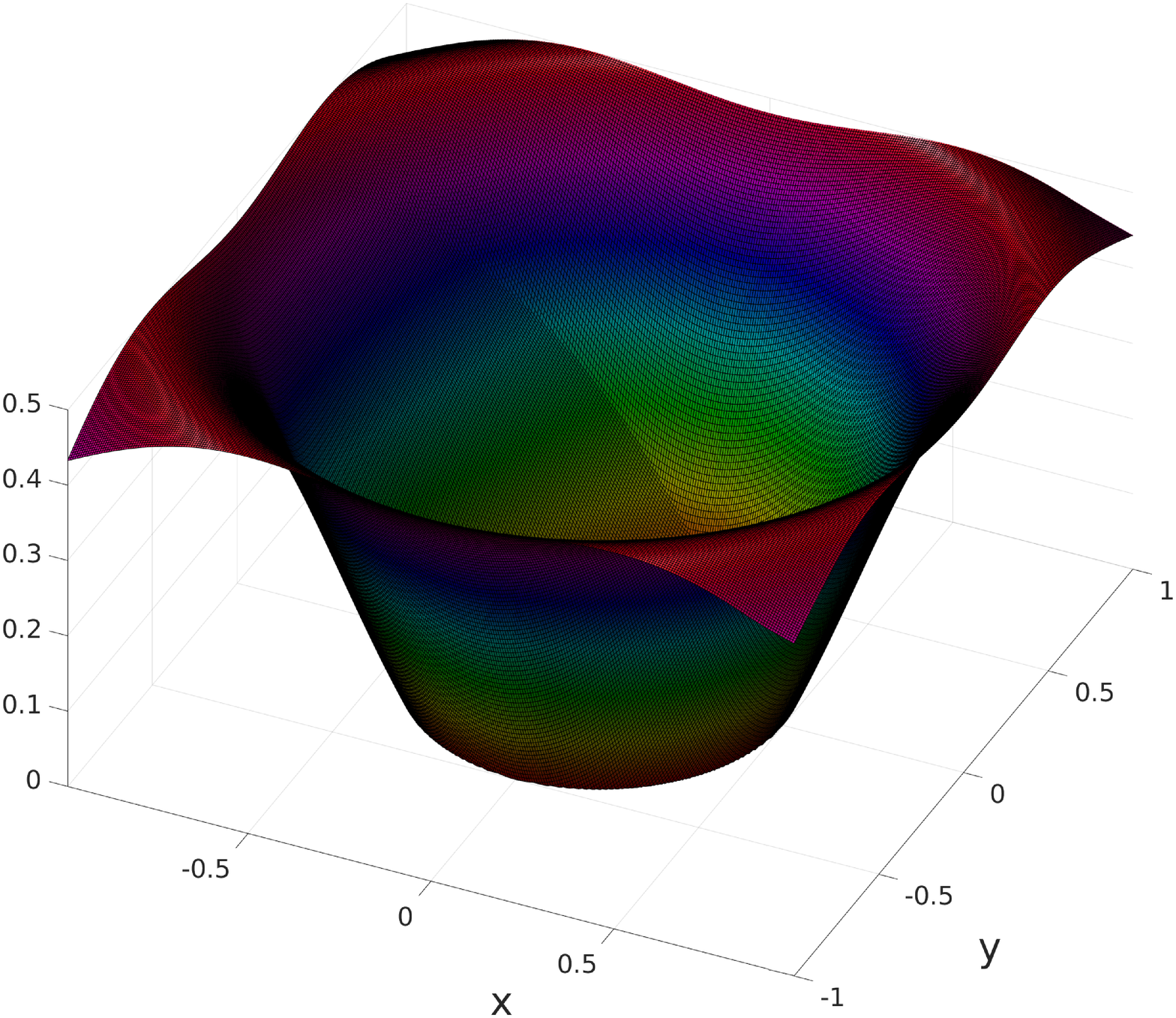} 
\end{minipage}
\begin{minipage}{.5\textwidth}
\includegraphics[width=2.5in,height=3.0in]{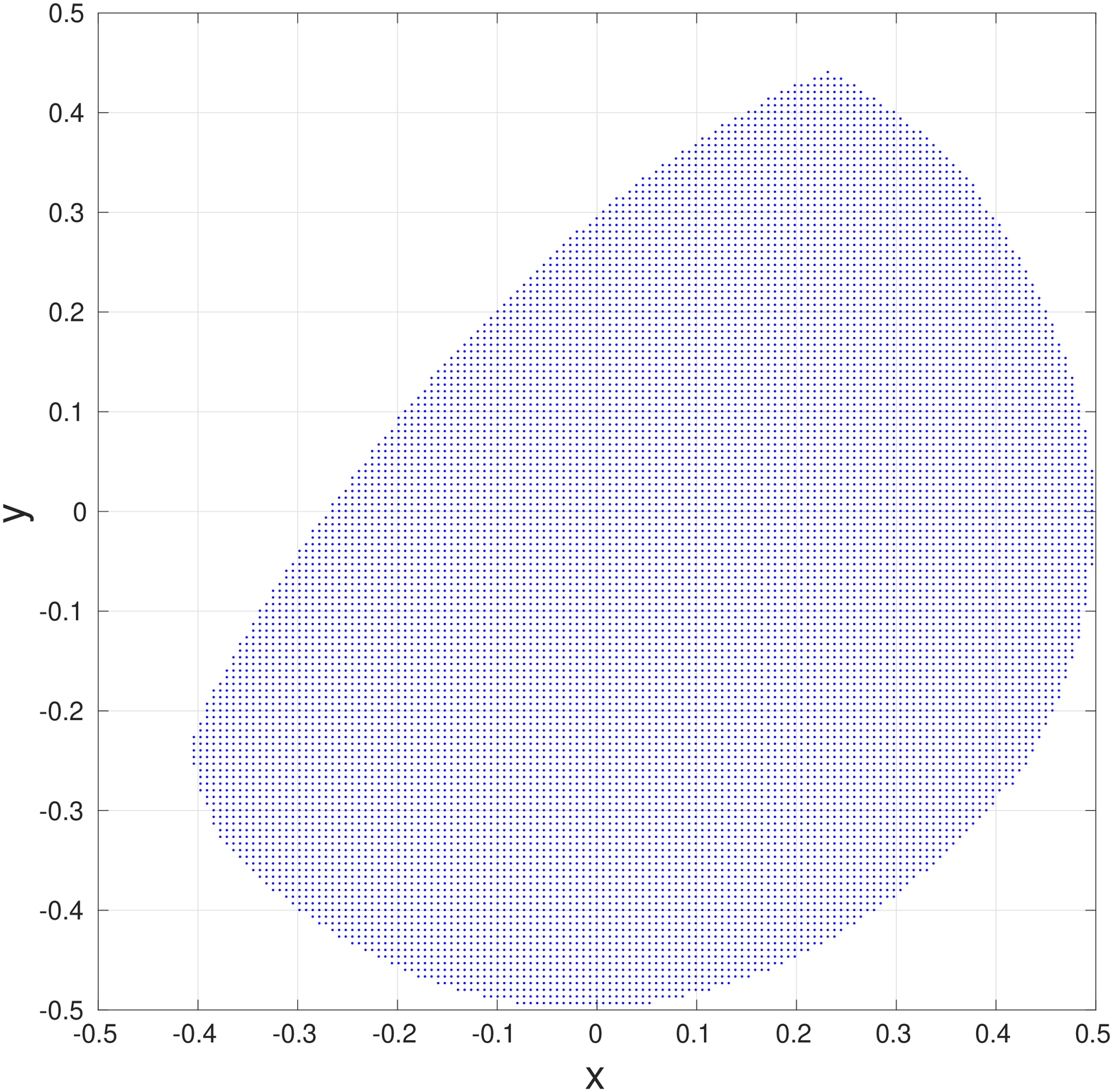}
\end{minipage}
\caption{Left: An illustration of the level sets of $V^-$ for Example \ref{ex1}. Right: An illustration of the lower robust controlled invariant  set $\mathcal{R}^-=\{\bm{x}\mid V^-(\bm{x})=0\}$(Blue region denotes $\mathcal{R}^-$).} 
\label{fig1}
\end{figure}

\begin{figure}
\begin{minipage}{.5\textwidth}
\includegraphics[width=2.5in,height=3.0in]{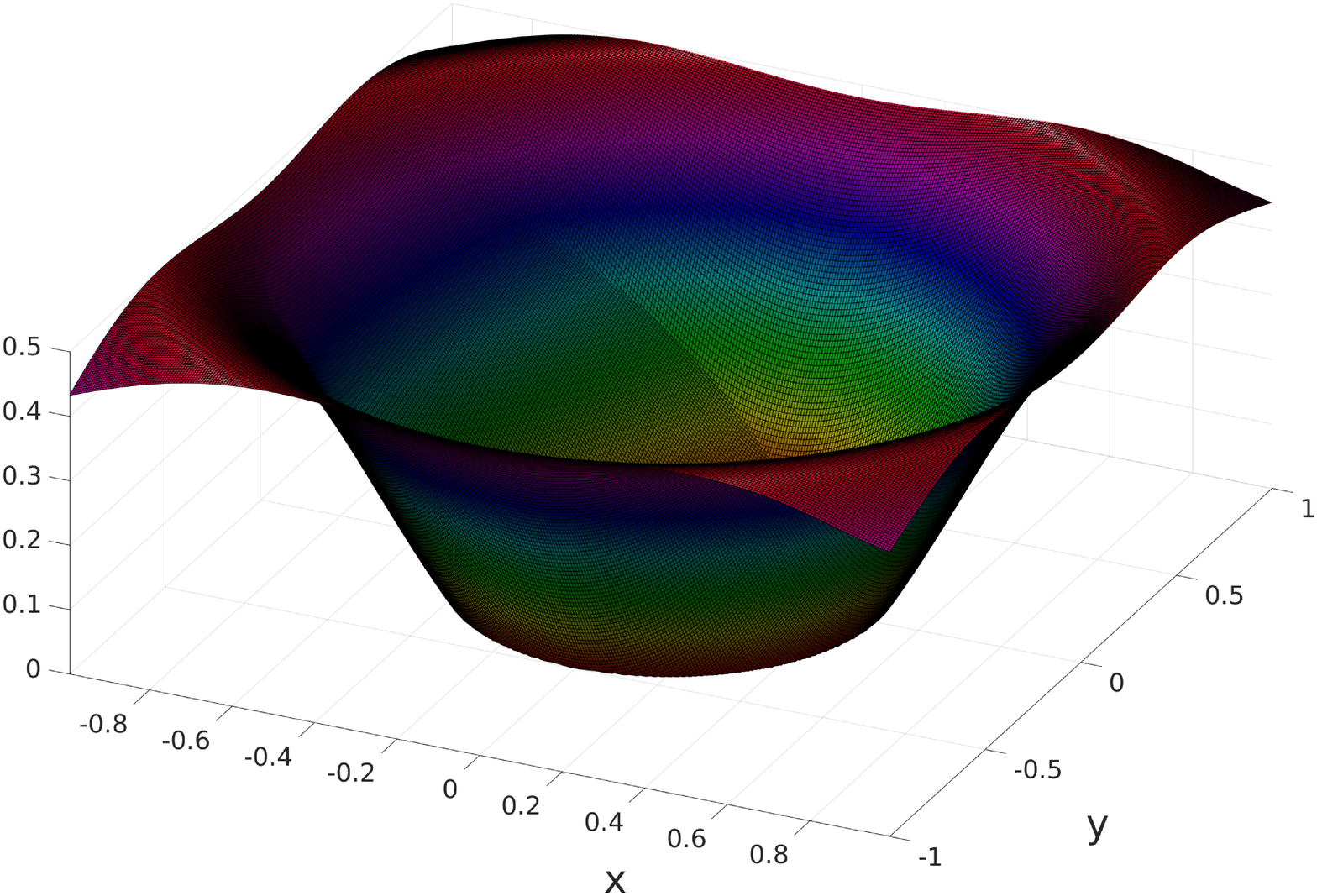} 
\end{minipage}
\begin{minipage}{.5\textwidth}
\includegraphics[width=2.5in,height=3.0in]{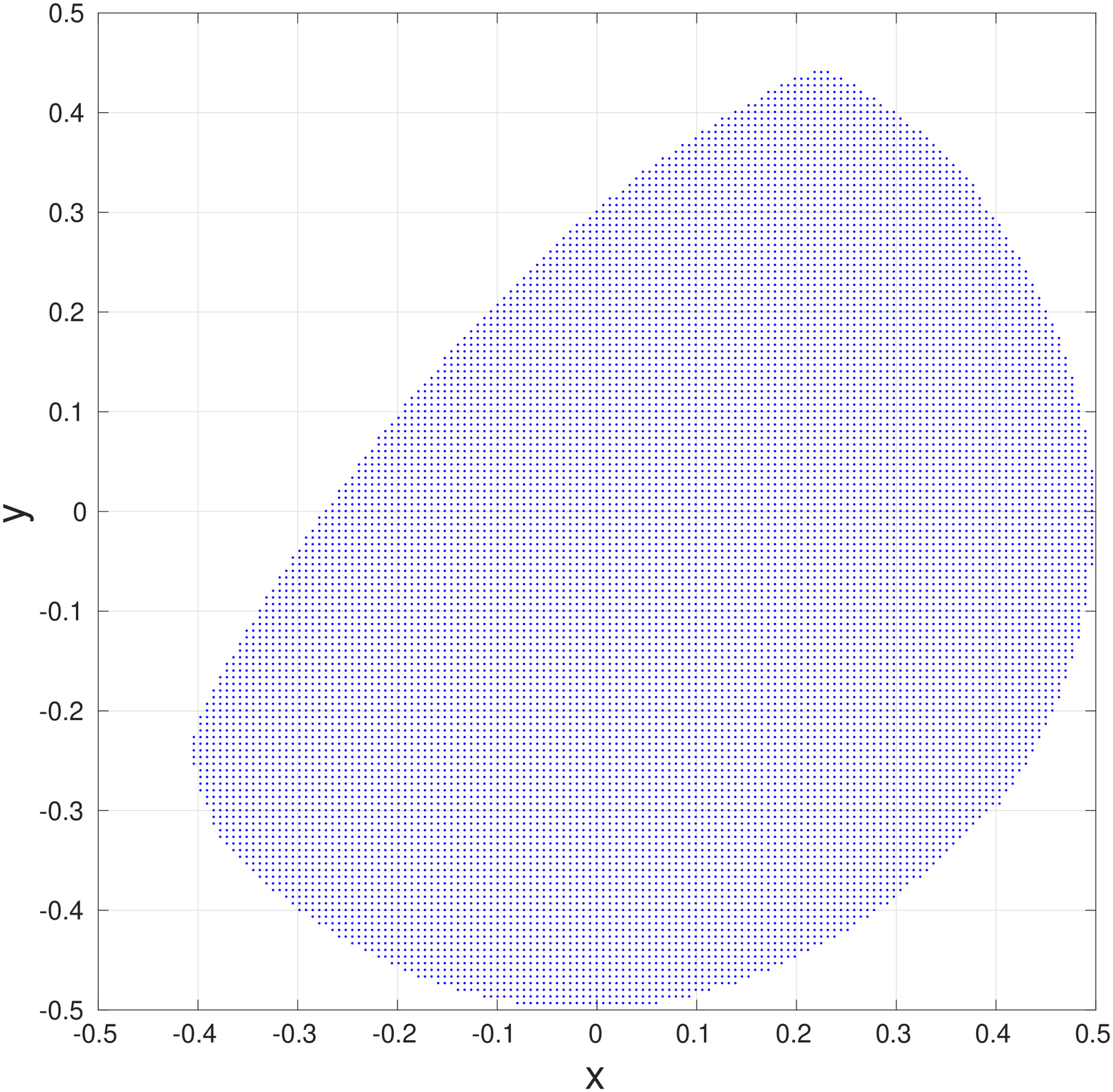} 
\end{minipage}
\caption{Left: An illustration of the level sets of $V^+$ for Example \ref{ex1}. Right: An illustration of the upper robust controlled invariant set $\mathcal{R}^+=\{\bm{x}\mid V^+(\bm{x})=0\}$ for Example \ref{ex1}(Blue region denotes $\{\bm{x}\mid V^+(\bm{x})=0\}$).} 
\label{fig2}
\end{figure}

\begin{figure}
\centering
\includegraphics[width=4.0in,height=2.5in]{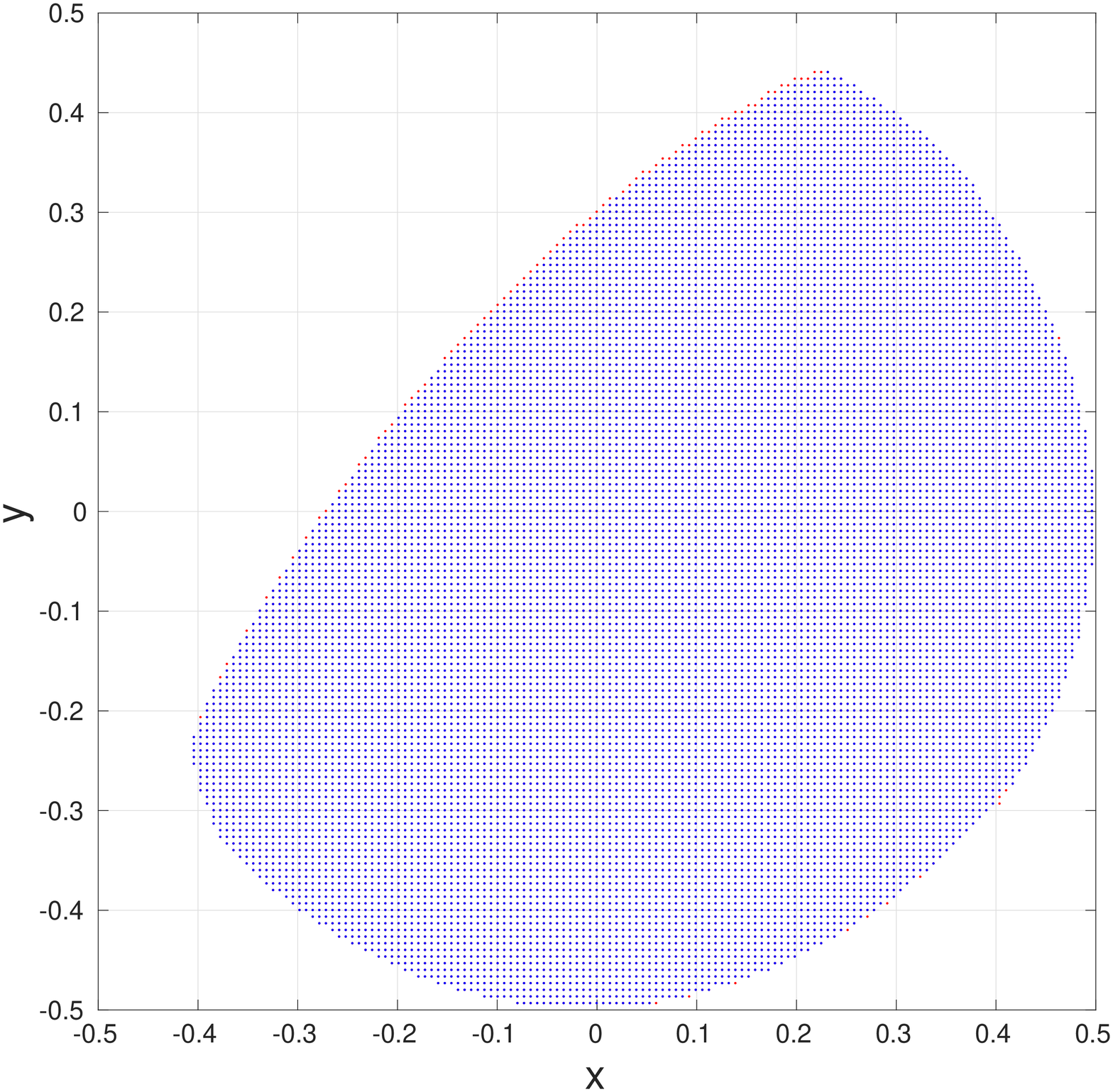} 
\caption{A comparison of the sets $\mathcal{R}^-=\{\bm{x}\mid V^-(\bm{x})=0\}$ and $\mathcal{R}^+=\{\bm{x}\mid V^+(\bm{x})=0\}$ for Example \ref{ex1}(Red region denotes $\mathcal{R}^-$. Blue region denotes $\mathcal{R}^+$).}
\label{fig5}
\end{figure}

\end{example}

\section{Conclusion and Future Work}
\label{con}
In this paper we considered infinite time reach-avoid differential game, in which the ego player aims to make the system satisfy certain safety specification perpetually while the mutual other player attempts to prevent the ego player from succeeding. This game was studied within the Hamilton-Jacobi reachability framework, in which the lower robust controlled invariant set is  the zero level set of the unique bounded Lipschitz continuous viscosity solution to a Hamilton-Jacobi equation with sup-inf Hamiltonian while the upper robust controlled invariant set is characterised as  the zero level set of the unique bounded Lipschitz continuous viscosity solution to a Hamilton-Jacobi equation with inf-sup Hamiltonian. Under the classical Isaacs condition, these two sets are equal. One example adopted from Moore-Greitzer model of a jet engine was employed to illustrate our approach.

In the future work we will further explore the relationship between the lower robust controlled invariant set $\mathcal{R}^-$ (the upper robust controlled invariant set $\mathcal{R}^+$) and the lower controlled invariant set $\mathcal{R}^{*-}$(the upper controlled invariant set $\mathcal{R}^{*+}$), where the concepts of the lower controlled invariant set $\mathcal{R}^{*-}$ and the upper controlled invariant set $\mathcal{R}^{*+}$ are given in Definition \ref{IC}. 
 \begin{definition}
 \label{IC}
 Let $\mathcal{X}=\{\bm{x}\in \mathbb{R}^n\mid h(\bm{x})\leq 0\}$ be a compact set in $\mathbb{R}^n$, where $h(\bm{x})$ is a bounded and locally Lipschitz continuous function in $\mathbb{R}^n$,

1)  The lower controlled invariant set $\mathcal{R}^{*-}$ of system \eqref{system} is the set of states $\bm{x}$'s such that there exists a non-anticipative strategy $\bm{\alpha}_{\bm{x}}(\cdot)\in \Gamma$ such that for any perturbation $\bm{d}(\cdot)\in \mathcal{D}$ the corresponding trajectory $\bm{\phi}_{\bm{x}}^{\bm{\alpha}_{\bm{x}}(\bm{d}),\bm{d}}(t)$ stays inside $\mathcal{X}$ for $t\geq 0$,
 i.e.,
\begin{equation*}
\begin{split}
 \mathcal{R}^{*-}=\{\bm{x}\in \mathbb{R}^n\mid &\exists \bm{\alpha}_{\bm{x}}(\cdot) \in \Gamma, \forall \bm{d}(\cdot) \in \mathcal{D}, \bm{\phi}_{\bm{x}}^{\bm{\alpha}_{\bm{x}}(\bm{d}),\bm{d}}(t)\in \mathcal{X} \text{ for }t\in [0,\infty)\}.
 \end{split}
 \end{equation*} 
 2). The upper controlled invariant set $\mathcal{R}^{*+}$ of system \eqref{system} is the set of states $\bm{x}$'s such that there exists a control action $\bm{u}_{\bm{x}}(\cdot)\in \mathcal{U}$ for all non-anticipative strategies $\bm{\beta}(\cdot)\in \Delta$ such that the trajectory $\bm{\phi}_{\bm{x}}^{\bm{u}_{\bm{x}},\bm{\beta}(\bm{u}_{\bm{x}})}(t)$ stays inside $\mathcal{X}$ for $t\geq 0$, i.e.,
\begin{equation*}
\begin{split}
\mathcal{R}^{*+}=\{\bm{x}\in \mathbb{R}^n\mid &\forall \bm{\beta}(\cdot) \in \Delta, \exists\bm{u}_{\bm{x}}(\cdot) \in \mathcal{U}, \bm{\phi}_{\bm{x}}^{\bm{u}_{\bm{x}},\bm{\beta}(\bm{u}_{\bm{x}})}(t)\in \mathcal{X} \text{ for } t\in [0,\infty)\}.
 \end{split}
\end{equation*} 
 \end{definition}

\bibliographystyle{abbrv}
\bibliography{references}


\end{document}